\documentclass[]{imsart}
\usepackage{amsmath,amsfonts,amssymb,amsthm,graphicx}
\usepackage{dsfont}    
\usepackage{enumerate}
\usepackage{color}
\usepackage[numbers]{natbib}
\usepackage{multirow}

\usepackage{subcaption}

\usepackage{algorithm}
\usepackage[noend]{algpseudocode}  
\usepackage[colorlinks=true,citecolor=blue,pdfpagemode=UseNone,pdfstartview=FitH]{hyperref}


\renewcommand{\d}{\,\mathrm{d}}

\newcommand{\p}{\mathbb{P}}


\newcommand{\var}{\mathrm{var}}   

\newcommand{\E}{\mathbb{E}}    
\newcommand{\R}{\mathbb{R}}    
\newcommand{\N}{\mathbb{N}}    

  \newcommand{\id}{\mathds{1}} 



\theoremstyle{plain}
\newtheorem{theorem}{Theorem}[section]

\newtheorem{lemma}[theorem]{Lemma}
\newtheorem{proposition}[theorem]{Proposition}

\theoremstyle{definition}
\newtheorem{definition}[theorem]{Definition}
\newtheorem{example}[theorem]{Example}

\theoremstyle{remark}
\newtheorem{remark}[theorem]{Remark}

\usepackage{tikz}
\usetikzlibrary{arrows.meta,positioning}




\begin{document}

 \begin{frontmatter} 
   \title{Testing with p*-values: Between p-values, mid p-values, and e-values} 
  \runtitle{Wang/Testing with p*-values} 

  \begin{aug} 
  \author[A]{\fnms{Ruodu} \snm{Wang}\ead[label=e1]{wang@uwaterloo.ca}} 
\address[A]{Department of Statistics and Actuarial Science,
University of Waterloo,
\printead{e1}}
  \end{aug}

  \begin{abstract}
 We introduce the notion of p*-values (p*-variables), which generalizes p-values (p-variables) in several senses. The new notion has four natural interpretations: operational, probabilistic,  Bayesian, and frequentist. A main example of a p*-value is a mid p-value, which arises in the presence of discrete test statistics. A unified stochastic representation for p-values, mid p-values, and p*-values is obtained to illustrate the relationship between the three objects.  
 We study several ways of merging arbitrarily dependent or independent p*-values into one p-value or p*-value.  Admissible calibrators of p*-values to and from p-values and e-values are obtained with nice mathematical forms, revealing the role of p*-values as a bridge between p-values and e-values. The notion of p*-values becomes useful in many situations even if one is only interested in p-values, mid p-values, or e-values. In particular, deterministic tests based on p*-values can be applied to  improve some classic methods for p-values and e-values.  
  \end{abstract}

  \begin{keyword}[class=MSC]
  \kwd[Primary ]{62G10, 62F03} 
  \kwd[; secondary ]{62C15}
  \end{keyword}

  \begin{keyword} 
  \kwd{Mid p-values} 
  \kwd{arbitrary dependence}
  \kwd{posterior predictive p-values}
  \kwd{average of p-values}
  \kwd{test martingale}  
  \end{keyword}

  \end{frontmatter}


%
%

 \section{Introduction}
 
 Hypothesis testing is usually conducted with the classic notion of p-values.  
We introduce the abstract  notion of \emph{p*-variables}, with \emph{p*-values} as their  realizations, defined via a simple inequality in stochastic order, in a way similar to p-variables and p-values. 
As generalized p-variables, p*-variables are  motivated by 
  mid p-values,  and they admit 
four natural interpretations: operational, probabilistic, Bayesian, and randomized testing, arising in various statistical contexts. 

The most important and practical example of p*-values is the class of mid p-values (\citet{L52}), arising from discrete test statistics. Mid p-variables are not p-variables, but they are  p*-variables.   Discrete test statistics appear in many applications, especially when data represent frequencies or counts; see e.g., \citet{DDR18} in the context of false discovery rate control.
 Another example of discrete p-values is the conformal p-values (\citet{VGS05}); see e.g., the recent study of \citet{BCRLS21} on detecting outliers using conformal p-values. 
 Using mid p-values is one way to address discrete test statistics; another way is using randomized p-values. 
We refer to  \citet{H15} for mid and randomized p-values in multiple testing, and \citet{RHL19} for probability bounds on combining independent mid p-values based on convex order.  

In addition to mid p-values, p*-values are also naturally connected to e-values. E-values have been recently introduced to the statistical community by \citet{VW21}, and they have several advantages 
in contrast to p-values, especially via their connections to Bayes factors and test martingales (\citet{SSVV11}), betting scores (\citet{S20}), universal inference (\citet{WRB20}), anytime-valid tests (\citet{GDK20}),   conformal tests (\citet{V20}), and false discovery rate under dependence (\citet{WR22}). 

In discussions where the probabilistic specification as random variables  is not emphasized, we will loosely use the term ``p/p*/e-values" for both p/p*/e-variables and their realizations,  similarly to \citet{VW20,VW21}, and this should be clear from the context.

The relationship between p*-values and mid p-values is studied in Section \ref{sec:r1-2}. We obtain a new stochastic representation for mid p-values (Theorem \ref{th:mid-p}), which unifies the classes of p-, mid p-, and p*-values. The set of p*-values is closed under several types of operations, and this closure property is not shared by that of p-values or mid p-values (Proposition \ref{prop:convex}).
Based on these results, we find that  p*-values serve as an abstract and generalized version of mid p-values which is mathematically more convenient to work with.

There are several equivalent definitions of  p*-variables: by stochastic order (Definition \ref{def:q}), by averaging p-variables (Theorem \ref{th:convex}), by conditional probability (Proposition \ref{prop:q}),  and by randomized tests (Proposition \ref{th:newdef}); each of them represents a natural statistical path to a generalization  of p-values, and these paths lead to the same mathematical object of p*-values. Moreover, a p*-value is a posterior predictive p-value of \citet{M94} in the Bayesian context. 
The   p*-test  in Section \ref{sec:pstar-test}  is a  randomized  version of the traditional p-test; the randomization is needed because p*-values are weaker than p-values.

Merging methods are useful in multiple hypothesis testing for both p-values and e-values. 
Merging several p-values or e-values are used, either directly or implicitly, in false discovery control procedures in \citet{GW04} and \citet{GS11} and generalized Bonferroni-Holm procedures (see \cite{VW20} for p-values and \cite{VW21} for e-values).  
We study merging functions for p*-values in Section \ref{sec:merg}, which turn out to have convenient structures.  In particular, we find that a (randomly) weighted geometric average of arbitrary p*-variables multiplied by $\mathrm {e}\approx 2.718$ is a p-variable (Theorem \ref{th:geom}), allowing for a simple combination  of p*-values under unknown dependence; a similar merging function for p-values is obtained by \cite{VW20}.
In the setting of merging independent p*-values, inequalities obtained by \cite{RHL19} on mid p-values can be directly applied to p*-values. 

We explore in Sections \ref{sec:5}   the connections among p-values, p*-values, and e-values by establishing results for admissible calibrators. Figure \ref{fig:1} summarizes these calibrators where the ones between p-values and e-values are obtained in \cite{VW21}. Notably, 
for an e-value $e$, $ (2e)^{-1} $ is a calibrated p*-value, which has an extra factor of $1/2$ compared to the standard calibrated p-value $e^{-1}$. 
A composition of the e-to-p* calibration  $p^*= (2e)^{-1}\wedge 1$ 
and  the p*-to-p calibration 
$p= (2 p^*)\wedge 1$  leads to  the unique admissible e-to-p calibration $p=e^{-1}\wedge1$, thus showing that p*-values serve as a bridge between e-values and p-values.

\begin{figure}
\begin{center}
\tikzstyle{bag} = [text width=5em, text centered]
\tikzstyle{end} = []
  \begin{tikzpicture}[
      mycircle/.style={
         circle,
         draw=black,
         fill=gray,
         fill opacity = 0.1,
         text opacity=1,
         inner sep=0pt,
         minimum size=40pt,
         font=\small},
               myinvisiblecircle/.style={
         circle,
         draw=black ,
         fill=gray,
         fill opacity = 0.1,
         text opacity=1,
         inner sep=0pt,
         minimum size=55pt,
         font=\small},
      myarrow/.style={-Stealth},
      node distance=1.5cm and 3.5cm
      ]
      \node[myinvisiblecircle]  (c1) {}; 
      \node at (0,-0.26) {p*-value};
                  \draw[   fill=gray,
         fill opacity = 0.04] (0,0.31) ellipse (0.85cm and 0.32cm);
       \node at (0,0.31) (c4) {\scriptsize mid p-value};
      \node[mycircle,below right=of c1] (c2) {e-value}; 
      \node[mycircle,below left=of c1] (c3) {p-value}; 

    \foreach \i/\j/\txt/\p in {
      c1.320/c2.140/{$e= f(p^*)$, $f$ convex}/below,
       c2.130/c1.330/{$p^*=  (2e)^{-1}\wedge 1$}/above,
      c1.200/c3.50/{$p=  (2p^*)\wedge 1$}/above,     
      c3.40/c1.210/$p^*=  p$/below,    
      c2.175/c3.5/{$p =  e^{-1}\wedge 1 $}/above,
      c3.355/c2.185/$e= f(p)$/below}
       \draw [myarrow] (\i) -- node[sloped,font=\small,\p] {\txt} (\j);
    \end{tikzpicture}
    \end{center}
    \caption{Calibration among p-values, p*-values and e-values, where $f:[0,1]\to [0,\infty]$ is   left-continuous and decreasing with $f(0)=\infty$ and $\int_0^1 f(t)\d t=1$.}
    \label{fig:1}
    \end{figure}
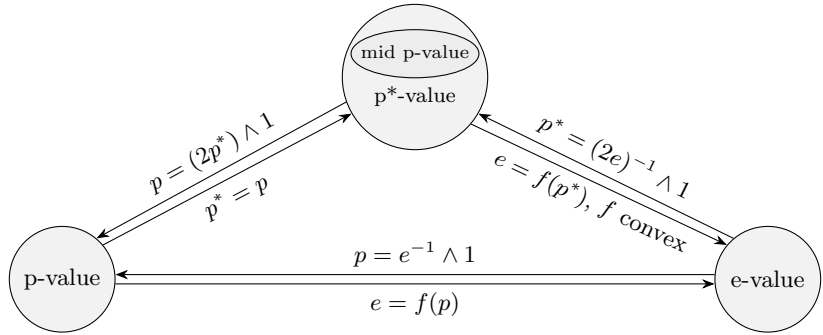

In classic statistical settings where precise (uniform on $[0,1]$) p-values are available, 
p*-values may not be directly useful as their properties are weaker than p-values. 
Nevertheless, applying a p*-test in situations where precise p-values are unavailable leads to several improvements on classic methods for p-values and e-values. 
An application on testing with e-values is discussed and numerically illustrated in Section \ref{sec:72},
and one on testing with discrete test statistics and mid p-values is presented in Section \ref{sec:r1-3}. (Another application is presented in Appendix \ref{sec:testing}.)
From these examples, we find that the tool of p*-values is useful even when one is primarily interested in p-values, mid p-values, or e-values.


The paper is written such that p-values, p*-values and e-values are  treated as  abstract measure-theoretical objects, following the setting of \cite{VW20,VW21}.  
Our null hypothesis is a generic and unspecified one, and it can be simple or composite; nevertheless, for  the discussions of our results, it would be harmless to keep a simple hypothesis  in mind as a primary example.  
All proofs are put in Appendix \ref{app:proofs} and  the randomized p*-test is discussed in Appendix \ref{sec:testing}.
 
\section{P-values, p*-values, and e-values}\label{sec:2}

%

Following  the setting of \cite{VW21}, we directly work with a fixed atomless probability space $(\Omega,\mathcal A,\p)$, where our (global) null hypothesis is set to be the singleton $\{\p\}$.
As explained in Appendix D of \cite{VW21}, no generality is lost as all mathematical results (of the kind in this paper) are valid also for general composite hypotheses. 
We assume that  $(\Omega,\mathcal A,\p)$ is  rich enough so that we can find a uniform random variable  independent of  a given random vector as we wish.  
We first define stochastic orders, which will be used to formulate the main objects in the paper.
All terms like ``increasing" and ``decreasing" are in the non-strict sense. 
\begin{definition}
Let $X$ and $Y$ be two random variables. 
\begin{enumerate}
\item 
$X$ is first-order stochastically smaller than   $Y$, written as $X\le_1 Y$,  if $\E[f(X)]\le \E[f(Y)]$ for all increasing real functions $f$  such that the expectations exist. 
\item $X$ is second-order stochastically smaller than   $Y$, written as $X\le_2 Y$,  if $\E[f(X)]\le \E[f(Y)]$ for all increasing  concave real functions $f$  such that the expectations exist.
\end{enumerate}
\end{definition}
Recall that the defining property of a p-value, realized by a random variable $P$, is that $\p(P\le \alpha) \le \alpha$ for all $\alpha \in (0,1)$,
meaning that the type-I error of rejecting $P$ based on $P\le \alpha$ is at most $\alpha$ (see e.g., \cite{VW21}).
The above is equivalent to the statement that $P$ is first-order stochastically larger than a uniform random variable on $[0,1]$ (e.g., Section 1.A of \cite{SS07}). 
Motivated by this simple observation, we define   p-variables and e-variables,  and our new concept, called p*-variables, via stochastic orders.

\begin{definition}\label{def:q}
Let $U$ be a uniform random variable on $[0,1]$.
\begin{enumerate}
\item A random variable $P$ is a \emph{p-variable} if $U\le_1 P$. 
\item A random variable $P$ is a  \emph{p*-variable} if $U\le_2 P$.
\item A random variable $E$ is an  \emph{e-variable} if $0\le_2 E\le_2 1$.
\end{enumerate}
\end{definition}
We allow both p-variables and p*-variables to take values above one, although such values are uninteresting, and one may   safely truncate  them at $1$; moreover, we allow an e-variable $E$ to take the value $\infty$ (but with probability $0$ under the null), which corresponds to a p-variable taking the value $0$ (also with probability $0$).
 
Since $\le_1$ is stronger than $\le_2$, a p-variable is also a p*-variable, but not vice versa.
Due to the close proximity between p-variables and p*-variables, we often use $P$ for both of them; this should not create any confusion.
We refer to \emph{p-values} as realizations of p-variables,
 \emph{p*-values} as those of p*-variables, and \emph{e-values} as those of e-variables.
   By definition, both a p-variable and a p*-variable have a mean at least $1/2$.

The classic definition of an  e-variable $E$  is via $\E[E]\le 1$ and $E \ge 0$ a.s.~(\cite{VW21}).
This is  equivalent to our Definition \ref{def:q} because
$$
 0\le_2 E~\Longleftrightarrow~ 0\le E~\mbox{a.s.}; \mbox{~~~~ ~~~~}
E\le_2 1~\Longleftrightarrow~ \E[E]\le 1.$$  
We choose to express our definition via stochastic orders to make an analogy among the three concepts, 
and stochastic orders will be a main technical tool for results in this paper.

Our main focus is the notion of p*-values, which will be motivated from five perspectives in Sections \ref{sec:r1-2} and \ref{sec:2}.

\begin{remark} \label{rem:2}
There are many equivalent conditions for the stochastic order $U\le_2 P$.
One of the most convenient conditions, which will be used repeatedly in this paper,  is (see e.g., Theorem 4.A.3 of \cite{SS07})
\begin{equation}\label{eq:quantile}
U\le_2 P ~\Longleftrightarrow~ \int_0^v G_P(u) \d u  \ge \frac{v^2} 2 \mbox{~for all $v\in (0,1)$},
\end{equation} where $G_P$ is the left-quantile function of $P$, 
defined as 
$$
G_P(u)=\inf \{x\in \R: \p(P>x) \le u\}, \mbox{~~~$u\in (0,1]$}.
$$
\end{remark}


\section{Mid p-values and discrete test statistics}
\label{sec:r1-2}

An important motivation for p*-values is 
the use of mid p-values and discrete test statistics. 
We first recall the usual practice to obtain p-values. 
Let $T$ be a test statistic which is a function of the observed data. 
Here, a smaller value of $T$ represents stronger evidence against the null hypothesis. 
The p-variable $P$ is usually computed from the conditional probability  \begin{equation}\label{eq:exceedance1} 
P= \p(T'\le T  |T)=F(T),\end{equation} 
where $F$ is the distribution of $T$, and $T'$ is an independent copy of $T$; here and below, a copy of  $T$ is a random variable identically distributed as $T$.

If $T$ has a continuous distribution, then the p-variable $P$ defined by \eqref{eq:exceedance1} has a standard uniform distribution. 
If the test statistic $T$ is discrete, e.g., when testing a binomial model $\mathrm{Binomial}(n,\pi)$, 
$P$ is strictly first-order stochastically larger than a uniform random variable on $[0,1]$. 

The discreteness  of $T$   leads to a   conservative p-value; in particular, $\E[P]>1/2$. One way to address this is to randomize the  p-value to make it uniform on $[0,1]$; however, randomization is generally undesirable in testing.  
As the most natural alternative, \emph{mid p-values} (\citet{L52}) arise in the presence of discrete test statistics. 
For the test statistic $T$,  the mid p-value is given by 
 \begin{equation}\label{eq:mid}
P_T=  \frac{1} 2\p(T'\le T|T)+\frac{1}2 \p(T'<T|T)  =  \frac 12 F(T-)+ \frac 12 F(T),
\end{equation}
where $F(t-)=\lim_{s\uparrow t} F(t)$ for $t\in \R$.
 Clearly, $  P_T\le  F(T)$ and $\E[  P_T ] =  1/2$.  
If $T$ is continuously distributed, then $P_T=F(T)$ is uniform on $[0,1]$. 
In case $T$ is discrete, $P_T$ is not a p-variable. 
In Figure \ref{fig:r1-2} we present some examples of quantile functions of p-, mid p- and p*-variables.

Similarly to the case of p-variables in Definition \ref{def:q},
we formally define a \emph{mid p-variable} as a random variable $P\ge_1 P_T$ for some test statistic $T$. Often we have equality (i.e., $P=_1P_T$), and a strict  inequality  may appear due to, e.g., composite hypotheses, similarly to the case of p-variables or e-variables.

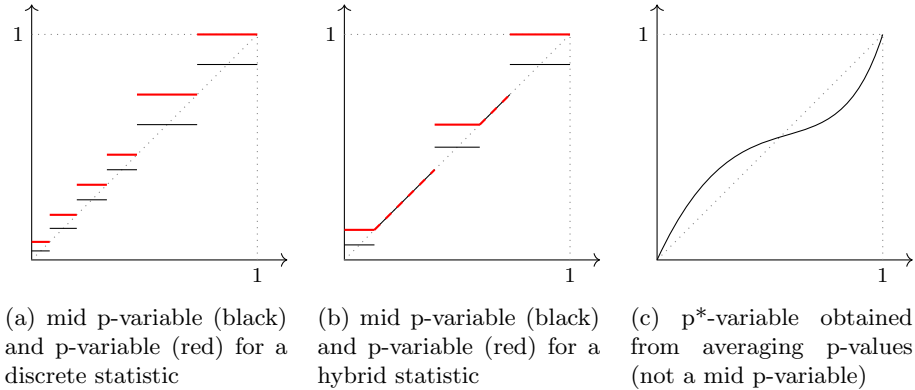
\begin{figure}[t]    
 \begin{subfigure}[b]{0.31\textwidth}
         \centering
\begin{tikzpicture}
\draw[<->] (0,3.4) -- (0,0) -- (3.4,0);
\draw[gray,dotted] (0,0) -- (3,3);
\draw[gray,dotted] (3,0) -- (3,3);
\draw[gray,dotted] (0,3) -- (3,3); 
\node[below] at (3,0) {$1$}; 
\node[left] at (0,3) {$1$}; 
\draw (0,0.12) -- (0.24,0.12);
\draw (0.24,0.42) -- (0.6,0.42);
\draw (0.6,0.8) -- (1.0,0.8);
\draw (1.0,1.2) -- (1.4,1.2);
\draw (1.4,1.8) -- (2.2,1.8);
\draw (2.2,2.6) -- (3,2.6); 
\draw[red, thick] (0,0.24) -- (0.24,0.24);
\draw[red, thick] (0.24,0.6) -- (0.6,0.6);
\draw[red, thick] (0.6,1.0) -- (1.0,1.0);
\draw[red, thick] (1.0,1.4) -- (1.4,1.4);
\draw[red, thick] (1.4,2.2) -- (2.2,2.2);
\draw[red, thick] (2.2,3) -- (3,3); 
\end{tikzpicture} 
         \caption{mid p-variable (black) and p-variable (red) for a discrete statistic}
         \label{fig:r1-2-1}
     \end{subfigure}
     ~~
      \begin{subfigure}[b]{0.31\textwidth}
         \centering
\begin{tikzpicture}
\draw[<->] (0,3.4) -- (0,0) -- (3.4,0);
\draw[gray,dotted] (0,0) -- (3,3);
\draw[gray,dotted] (3,0) -- (3,3);
\draw[gray,dotted] (0,3) -- (3,3); 
\node[below] at (3,0) {$1$}; 
\node[left] at (0,3) {$1$};  
\draw (0,0.2) -- (0.4,0.2);
\draw (0.4,0.4) --  (1.2,1.2);
\draw (1.2,1.5) -- (1.8,1.5);
\draw (1.8,1.8) -- (2.2,2.2);
\draw (2.2,2.6) -- (3,2.6); 
\draw[red, thick] (0,0.4) -- (0.4,0.4); 
\draw[red, thick, dashed] (0.4,0.4) --  (1.2,1.2);
\draw[red, thick] (1.2,1.8) -- (1.8,1.8); 
\draw[red, thick, dashed] (1.8,1.8) -- (2.2,2.2);
\draw[red, thick] (2.2,3) -- (3,3); 
\end{tikzpicture} 
         \caption{mid p-variable (black) and p-variable (red) for a  hybrid statistic}
        \label{fig:r1-2-2}
     \end{subfigure}
     ~~
      \begin{subfigure}[b]{0.31\textwidth}
         \centering
\begin{tikzpicture}
\draw[<->] (0,3.4) -- (0,0) -- (3.4,0);
\draw[gray,dotted] (0,0) -- (3,3);
\draw[gray,dotted] (3,0) -- (3,3);
\draw[gray,dotted] (0,3) -- (3,3); 
\node[below] at (3,0) {$1$}; 
\node[left] at (0,3) {$1$}; 
\draw [black] (0,0) .. controls (1.2,2.6) and (2.3,0.8).. (3,3);
\end{tikzpicture} 
         \caption{p*-variable obtained from averaging p-values (not a mid p-variable)}
          \label{fig:r1-2-3}
     \end{subfigure}
 
\caption{Examples of quantile functions of p-, mid p- and p*-variables}
\label{fig:r1-2}
\end{figure}

It is straightforward to verify that mid p-variables are p*-variables; see   \cite{RHL19} where  convex order is used in replace of our second-order stochastic dominance. 
The following theorem establishes a new unified stochastic representation for p-, mid p-, and p*-variables.


\begin{theorem}\label{th:mid-p} 
Let $U$ be a standard uniform random variable. 
For a random variable $P$,
\begin{enumerate}[(i)]
\item $P$ is a p-variable if and only if $P\ge_1  \E[U|V]$ for some $V$ which is a strictly increasing function of $U$;
\item $P$ is a mid p-variable if and only if $P\ge_1  \E[U|V]$ for some  $V$ which is an increasing function of $U$;
\item $P$ is a p*-variable if and only if  $P\ge_1  \E[U|V]$ for  some $V$ which is any random variable.
\end{enumerate}
\end{theorem}

\begin{remark}
Conditional expectations and conditional probabilities are  defined in the a.s.~sense, as usual in probability theory.
 \end{remark}


%
%
%
%

From Theorem \ref{th:mid-p}, it is clear that mid p-variables are special cases of p*-variables but the converse is not true. 
As a direct consequence, all   results later on p*-variables can be directly applied to mid p-values. 
The stochastic representations  in Theorem \ref{th:mid-p}   may not be directly useful in statistical inference; nevertheless they reveal a deep connection between mid p-values and p*-values, allowing us to analyze  possible improvements of methods designed for p*-variables when applied to mid p-values, and vice versa.

In the next result, we summarize some closure property of the sets of p-, mid p and p*-variables. 
\begin{proposition}\label{prop:convex}
\begin{enumerate}[(i)]
\item The set  of p*-variables is closed under convex combinations and under distribution mixtures.
\item The set  of p-variables is closed under distribution mixtures but not under convex combinations.
\item The set of mid p-variables is  neither closed under  convex combinations nor   under distribution mixtures.
\item The three sets above are all closed  under convergence in distribution. 

\end{enumerate}
\end{proposition}

 Proposition \ref{prop:convex} suggests  
 that the set of p*-variables has the nicest closure properties among the three. 
  Moreover, we will see in Theorem \ref{th:convex} below that the set of p*-variables is precisely the convex hull of the set of p-variables, and hence, it is also the convex hull of the set of mid p-variables which contains all p-variables. 
  

%
\section{Four formulations of p*-values}\label{sec:2}
In this section, we will present four further equivalent definitions of p*-values, each arising from a different statistical context, and providing several interpretations of p*-values. 
 

\subsection{Averages of p-values}  

Our first interpretation of p*-variables is operational: 
we will see that a p*-variable is precisely the arithmetic average of some p-values which are obtained from possibly different sources and arbitrarily dependent. 
This characterization relies on a recent technical result of \citet{MWW19} on the sum of standard uniform random variables.

\begin{theorem}\label{th:convex}
A random variable is a p*-variable if and only if 
it is the convex combination of some p-variables. 
Moreover, any p*-variable can be expressed as the arithmetic average of three p-variables. 
\end{theorem}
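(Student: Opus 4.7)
The ``if'' direction is a direct application of Jensen's inequality. Suppose $P = \sum_{i=1}^n \lambda_i P_i$ with $\lambda_i \ge 0$, $\sum_i \lambda_i = 1$, and each $P_i$ a p-variable. Since $\le_1$ implies $\le_2$, each $P_i$ satisfies $U \le_2 P_i$. For any increasing concave $f$ with well-defined expectations, Jensen's inequality gives $f(P) \ge \sum_i \lambda_i f(P_i)$ pointwise, and taking expectations yields $\E[f(P)] \ge \sum_i \lambda_i \E[f(P_i)] \ge \E[f(U)]$. Hence $U \le_2 P$, so $P$ is a p*-variable.

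For the ``only if'' direction, I would target the sharper ``moreover'' statement directly: given a p*-variable $P$, produce three p-variables $P_1, P_2, P_3$ with $P_1 + P_2 + P_3 = 3P$ almost surely. By Remark \ref{rem:2}, being a p*-variable is equivalent to the quantile-integral inequality $\int_0^v G_P(u)\d u \ge v^2/2$ for all $v \in (0,1)$; after scaling, $Y = 3P$ satisfies the same inequality with right-hand side $3v^2/2$. This is precisely the form of second-order dominance hypothesis that drives the sum-of-standard-uniforms decomposition theorem of \citet{MWW19}. Invoking their result on $Y$ should produce three jointly constructed random variables, each at least first-order larger than $U$ (hence each a p-variable), summing to $3P$ almost surely.

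The main technical obstacle is reconciling the bounded-support setting in which \citet{MWW19} is typically stated with the fact that a p*-variable may exceed $1$. A natural reduction is first to verify that $\tilde P := P \wedge 1$ remains a p*-variable: the quantile-integral inequality is immediate for $v$ up to $u^* := \sup\{u : G_P(u) \le 1\}$, and for $v > u^*$ it reduces to the elementary bound $(u^* - 1)^2 \ge (v - 1)^2$, which holds since $u^* \le v \le 1$. One then applies the MWW19 decomposition to $3\tilde P$ and absorbs the nonnegative excess $P - \tilde P$ into one of the three components; this preserves the p-variable property because adding a nonnegative random variable to a p-variable can only decrease $\p(\cdot \le \alpha)$ for $\alpha \in (0,1)$. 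The sharpness of the number three should reflect the minimal $n$ for which \citet{MWW19} guarantees such a decomposition for every qualifying target.
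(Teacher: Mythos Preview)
Your ``if'' direction matches the paper's proof exactly.

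For the ``only if'' direction there is a genuine gap. The decomposition theorem of \citet{MWW19} (their Theorem~5, as invoked in the paper) requires the \emph{concave order} $P \ge_{\rm cv} U$, not merely the second-order stochastic dominance $U \le_2 P$: it produces three \emph{standard uniform} random variables summing to $3P$, which forces $\E[P] = 1/2$ as a necessary part of the hypothesis. Your quantile-integral condition from Remark~\ref{rem:2} is equivalent only to $U \le_2 P$, and your truncation $\tilde P = P \wedge 1$ addresses the support issue but does nothing about the mean. For instance, if $P \equiv 3/4$ (a valid p*-variable already supported in $[0,1]$), then $\tilde P = P$ with $\E[\tilde P] = 3/4 > 1/2$, and the MWW19 hypothesis fails outright---there is no way to write $9/4$ as a sum of three standard uniforms.

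The paper's proof handles this differently. For a p*-variable $P$ with $\E[P] > 1/2$, it applies Strassen's theorem (in the form of \cite[Theorems~4.A.5 and~4.A.6]{SS07}) to produce a random variable $Z$ with $U \le_{\rm cv} Z$ and $Z \le P$ pointwise; the concave order forces $\E[Z] = 1/2$, so MWW19 applies to $Z$ and yields three uniforms $P_1,P_2,P_3$ with $3Z = P_1+P_2+P_3$. The nonnegative excess $P - Z$ is then added to each $P_i$, giving p-variables $\tilde P_i = P_i + (P-Z)$ summing to $3P$. Your instinct to absorb a nonnegative excess is right, but the correct excess is $P - Z$ (with $Z$ engineered via Strassen to hit the mean constraint), not $P - (P \wedge 1)$.
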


\begin{remark}
As implied by  Theorem 5 of  \cite{MWW19}, a p*-variable can always be written as the arithmetic average of $n$ dependent p-variables for any $n\ge 3$, but the statement  
is not true for $n=2$ (\cite[Proposition 1]{MWW19}). 
\end{remark}

As a consequence of Theorem \ref{th:convex},  the set of p*-variables is the convex hull of the set of p-variables, as briefly mentioned in Section \ref{sec:r1-2}.
Testing with the arithmetic average of dependent p-values has been studied by \cite{VW20}. 
We further 
discuss  in Appendix \ref{sec:testing} an application of p*-values which improves some tests based on arithmetic averages of p-values.

\subsection{Conditional probability of exceedance} 
\label{sec:31}

Our second interpretation is probabilistic:  we will interpret both p-variables and p*-variables as conditional probabilities. 
Let $T$ be a test statistic which is a function of the observed data, represented by a vector $X$. 
Recall that a p-variable $P$ in \eqref{eq:exceedance1} has the form 
 $$ 
P=F(T)= \p(T'\le T  |T)= \p(T'\le T  |X). $$   
It turns out that p*-variables have a similar representation, where the only difference is that $T$ defining a p*-variable may not be a function of $X$; instead, $T$ can include some unobservable randomness or additional randomness in the statistical experiment (but the p*-variable itself is deterministic from the data). 


\begin{proposition}\label{prop:q}
For a $\sigma(X)$-measurable  random variable $P$,  
\begin{enumerate}[(i)]
\item $P$ is a p-variable if and only if there exists a  $\sigma(X)$-measurable   $T$ such that $P\ge \p(T'\le T  |X)$  where $T'$ is a copy of $T $ independent of $X$;
\item $P$ is a p*-variable if and only if there exists a random variable $T$ such that $P\ge \p(T'\le T  |X)$  where $T'$ is a copy of $T $ independent of $(T,X)$.
\end{enumerate}
In (ii) above, 
$
\p(T'\le T|X) 
$ can be safely replaced  by    $\p(T'\le T|X)/2+\p(T'<T|X)/2$.
\end{proposition}

Proposition \ref{prop:q} suggests that p*-variables are very similar to p-variables when interpreted as conditional probabilities; the only difference is whether extra randomness is allowed in $T$.
 
\begin{remark}
Both Proposition \ref{prop:q} and Theorem \ref{th:mid-p} give   stochastic representations for p- and p*-variables. They are similar and with a few differences. 
First, one is stated  via stochastic order whereas  the other via inequalities between random variables.
Second, one involves a uniform random variable $U$ whereas the other does not as $T$ may be discrete.
Third, measurability conditions are different as Proposition \ref{prop:q} specifies $\sigma(X)$. 
\end{remark}

\subsection{Posterior predictive p-values}
In the Bayesian context, the posterior predictive p-value of \citet{M94} is a p*-value. 
Let $X$ be the data vector in Section \ref{sec:31}. The null hypothesis $H_0$ is given by 
$\{\psi\in \Psi_0\}$ where $\Psi_0$ is a subset of the parameter space $\Psi$ on which a prior distribution is specified. 
The posterior predictive p-value   is defined as the realization of the random variable
$$
P_B:= \p(D( X',\psi) \ge D( X,\psi) | X),
$$
where $D$ is a function (taking a similar role as  test statistics), $ X'$ and $ X$  are iid conditional on $\psi$, and the probability is computed under the joint posterior distribution of $( X',\psi)$. 
Note that $P_B$ can be rewritten as 
$$
P_B=\int  \p(D( X',y) \ge D( X,y) | X,y) \d \Pi(y| X)
$$
where $\Pi$ is the posterior distribution of $\psi$ given the data $  X$. 
One can check that $P_B$ is a p*-variable by using Jensen's inequality; see Theorem 1 of \cite{M94} where   $D( X,\psi)$ is assumed to be continuously distributed conditional on $\psi$.

In this formulation, p*-variables are obtained by integrating p-variables over the posterior distribution of some unobservable parameter.
Since  p*-variables are treated as  measure-theoretic objects in this paper, we omit a detailed discussion of the Bayesian interpretation; nevertheless, it is reassuring that p*-values have a natural appearance in the Bayesian context as put forward by \citet{M94}. One of our later results is related to an observation of \cite{M94} that two times a p*-variable is a p-variable (see Proposition \ref{prop:trivial}).

 \subsection{Randomized tests with p*-values}
 \label{sec:pstar-test}

 Recall that the defining property of a p-variable $P$ is that the standard p-test
\begin{equation}\label{eq:p-test}
  \mbox{rejecting the null hypothesis} ~ \Longleftrightarrow~ P\le \alpha
 \end{equation}
 has size (i.e., probability of type-I error) at most $\alpha$ for each $\alpha\in (0,1)$.  
Since p*-values are a weaker version  of p-values, 
one cannot guarantee that the test \eqref{eq:p-test} for a p*-variable $P$ has size at most $\alpha$.
Nevertheless,  a randomized version of  the test \eqref{eq:p-test} turns out to be valid. 
Moreover, this randomized test yields a defining property for p*-variables,
just like p-variables are defined by the deterministic p-test \eqref{eq:p-test}. 

    \begin{proposition}\label{th:newdef}
Let $V_\alpha \sim \mathrm U[0,2\alpha]$ and a random variable $P$ be independent of $ V_\alpha $, $\alpha \in (0,1/2]$.  Then $\p(P\le V_\alpha)\le \alpha$ for all $\alpha \in(0,1/2]$   if and only if $P$ is a p*-variable.
    \end{proposition}

Proposition \ref{th:newdef} implies that p*-variables are precisely test statistics which can pass the \emph{randomized p*-test} (rejection via $P\le V_\alpha$) with the specified level $\alpha$, thus a further equivalent definition of p*-variables. 
         The drawback of the randomized p*-test is   obvious: an extra layer of randomization is needed. This undesirable feature is the price one has to pay when a p-variable is weakened to a p*-variable.  More details and applications of the randomized p*-test are put in Appendix \ref{sec:testing}. 
         Since randomization is undesirable, we omit the detailed discussions from the main paper. 
     \begin{remark}
 The  random threshold $V_\alpha$ in Proposition \ref{th:newdef} 
 can be replaced by any random variable $V$ with mean $\alpha$ and a decreasing  density function on $(0,1)$. 
 The uniform distribution on $[0,2\alpha]$ turns out to be the one with the smallest variance among all valid choices of the random threshold (see Proposition \ref{prop:con}).
     \end{remark}

\section{Merging p*-values and mid p-values}
\label{sec:merg}

Merging p-values and e-values is extensively studied in the literature of multiple hypothesis testing; see the recent studies \cite{LX20,VW21,VWW20} and the references therein.   
We will be interested in  merging p*-values (including mid p-values)  into both a p*-value and  a p-value.  
The following proposition gives a convenient conversion rule between p*-values and p-values.
The fact that two times a p*-variable is a p-variable is already observed by \cite{M94}.
\begin{proposition}
\label{prop:trivial}
 A p-variable is a p*-variable, and  the sum of two p*-variables is a p-variable.
 \end{proposition} 
 
 Proposition \ref{prop:trivial} implies that, in order to obtain a valid p-value from  several p*-values, a naive method is to multiply each p*-value by $2$ and then apply a valid method for merging p-values (under the corresponding assumptions).
 We will see in the next few results that we can often obtain stronger results than this.

As argued by \citet[p.50-51]{E10}, dependence assumptions are difficult to verify in multiple hypothesis testing. We will first focus on the case of arbitrarily dependent p*-values, that is, without making any assumptions on the dependence structure of the p*-variables,
and then turn to the case of independent or positively dependent p*-variables.

\subsection{Arbitrarily dependent p*-values}\label{sec:arbitrary}

%

We first provide a new method which merges several p*-values into a p-value based on geometric averaging.
\citet{VW20} showed that the geometric average of p-variables 
may fail to be a p-variable, but it yields a p-variable when multiplied by $\mathrm{e}:=\exp(1)$. The constant $\mathrm e\approx 2.718$ is practically the best-possible (smallest) multiplier (see \cite[Table 2]{VW20}) that provides validity against all dependence structures.
In the next result, we show that  a similar but stronger result holds for p*-values: the geometric average of p*-variables multiplied by $\mathrm e$ is not only a p*-variable, but also a p-variable, and this  holds also for randomly weighted geometric averages. 
For using weighted p-values in multiple testing, see e.g., \citet{BH97}.

In what follows,  the (randomly) weighted geometric average $\tilde P$  of $ P_1,\dots,P_K$ for random weights $w_1,\dots,w_K$ is given by 
$$
\tilde P= \prod_{k=1}^K P_k^{w_k} = \exp\left(\sum_{k=1}^K w_k\log P_k\right),
$$ 
where $w_1,\dots,w_K$ satisfy
\begin{equation}\label{eq:weights}
 w_1,\dots,w_K \ge 0, ~\mbox{independent of $(P_1,\dots,P_K)$, and $\sum_{k=1}^K w_k  = 1$.}
\end{equation}   
If $w_1=\dots=w_K=1/K$, then $\tilde P$ is the    unweighted geometric average of $P_1,\dots,P_K$.

\begin{theorem}
\label{th:geom}
Let $\tilde P$ be a  weighted geometric average of  p*-variables. Then $\mathrm e \tilde P$ is a p-variable.
 That is, for arbitrary p*-variables $P_1,\dots,P_K$ and   weights $w_1,\dots,w_K  $   satisfying \eqref{eq:weights},
\begin{equation}\label{eq:geom}
\p\left(\prod_{k=1}^K P_k^{w_k}\le \alpha \right) \le \mathrm e \alpha ~~~\mbox{for all $\alpha \in (0,1)$}.
\end{equation}
\end{theorem}

%

In Theorem \ref{th:geom}, 
the random weights are allowed to be arbitrarily dependent. 
These random weights may come from preliminary experiments. One way to obtain such weights is to use scores such as e-values from preliminary data. 
Using e-values to compute   weights is quite natural as a main motivation of e-values is  an accumulation of evidence between consecutive experiments; see \cite{GDK20},  \cite{VW21} and \cite{WR22}.


Theorem \ref{th:geom} generalizes the result of \cite[Proposition 4]{VW20} which considered unweighted geometric average of p-values.  When dependence is unspecified, testing with (randomly) weighted   geometric averages of p*-values has the same critical values $\alpha/\mathrm e$ as those with unweighted p-values.  

Next, we will study two methods which merges p*-values into a p*-value. Since two times p*-variable is a p-variable, probability guarantee can also be obtained from these merging functions.
A \emph{p*-merging function} in dimension $K$ is an increasing Borel function $M$ on $[0,\infty)^K$ such that  $M(P_1,\dots,P_K)$ is a p*-variable for all p*-variables $P_1,\dots,P_K$; p-merging and e-merging functions are defined analogously; see   \cite{VW21}.
A p*-merging function $M$ is \emph{admissible} if it is not strictly dominated by another p*-merging function.
\begin{proposition}\label{prop:p*-merg}
The arithmetic average $M_{K}$ is an admissible p*-merging function in any dimension $K$.
\end{proposition}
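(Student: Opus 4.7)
The plan is to split the claim into two parts: first that $M_K$ is a p*-merging function, and then that it is admissible.

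For the merging property, I would directly import the Jensen-type computation from the proof of Theorem \ref{th:convex}. That proof treats the inputs as p-variables, but the same estimate goes through verbatim for p*-variables: for any increasing concave $f$,
\[
\E\!\left[f\!\left(\tfrac{1}{K}\sum_{k=1}^K P_k\right)\right] \;\ge\; \tfrac{1}{K}\sum_{k=1}^K \E[f(P_k)] \;\ge\; \E[f(U)],
\]
where the second inequality now uses $U\le_2 P_k$ directly instead of the stronger $U\le_1 P_k$. Hence $M_K(P_1,\dots,P_K)\ge_2 U$ whenever each $P_k$ is a p*-variable.

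For admissibility I would argue by contradiction: assume there is a p*-merging function $M'$ with $M'\le M_K$ pointwise on $[0,1]^K$ and $M'(q)<M_K(q)$ at some $q\in[0,1]^K$. Let $U_1,\dots,U_K$ be iid uniform on $[0,1]$, which are p-variables and thus p*-variables. Then $M_K(U_1,\dots,U_K)=\bar U$ has mean exactly $1/2$, so the pointwise domination forces $\E[M'(U_1,\dots,U_K)]\le 1/2$. On the other hand, $M'(U_1,\dots,U_K)$ is itself a p*-variable, hence has mean at least $1/2$. Combining the two bounds with the a.s.\ inequality $M'(U_1,\dots,U_K)\le\bar U$ gives $M'(U_1,\dots,U_K)=\bar U$ a.s., and by Fubini, $M'=M_K$ Lebesgue-a.e.\ on $[0,1]^K$.

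The main obstacle is turning this a.e.\ equality into pointwise equality to contradict the strict inequality at $q$. The plan is to exploit monotonicity together with continuity: $M'$ is coordinatewise increasing by the definition of a merging function, while $M_K$ is continuous. For any $q\in[0,1]^K$ I would choose coordinatewise-monotone sequences $q^{(n)}\uparrow q$ and $r^{(n)}\downarrow q$ lying in the full-measure set where $M'=M_K$, and then sandwich
\[
M_K(q^{(n)}) \;=\; M'(q^{(n)}) \;\le\; M'(q) \;\le\; M'(r^{(n)}) \;=\; M_K(r^{(n)});
\]
passing to the limit with continuity of $M_K$ forces $M'(q)=M_K(q)$ everywhere, contradicting the strict inequality.
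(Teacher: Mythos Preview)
Your argument tracks the paper's closely. For validity, both routes invoke the Jensen estimate behind Theorem~\ref{th:convex}; the paper simply cites that theorem, while you reproduce the computation and correctly note that it runs directly from $U\le_2 P_k$ rather than $U\le_1 P_k$. For admissibility, both feed i.i.d.\ uniforms into a hypothetical strict dominator and exploit the mean-$1/2$ constraint. The paper compresses this into one line, asserting that strict domination forces $\p\bigl(M(U_1,\dots,U_K)<M_K(U_1,\dots,U_K)\bigr)>0$ and hence $\E[M(U_1,\dots,U_K)]<1/2$; you instead bound the mean from both sides to get $M'=M_K$ Lebesgue-a.e.\ and then upgrade to pointwise equality via the monotone/continuous sandwich. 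In effect you are supplying the justification for the step the paper takes for granted, so the two proofs are the same argument at different levels of detail.

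One small caveat on the sandwich: the approximants $q^{(n)}\le q$ from the full-measure set can be found whenever every $q_i>0$ (the boxes $\prod_i(q_i-\tfrac1n,q_i)$ then have positive measure and must meet that set), but if some $q_i=0$ the relevant face of $[0,1]^K$ is Lebesgue-null and the lower approximation is unavailable. This boundary case is equally unaddressed by the paper's one-line assertion; since any p*-variable is a.s.\ strictly positive, the values of a merging function on $\{x:\min_i x_i=0\}$ never influence $M'(P_1,\dots,P_K)$, so the natural resolution is to treat merging functions that agree on $(0,\infty)^K$ as the same, after which your argument goes through unchanged.
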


Proposition \ref{prop:p*-merg} illustrates that p*-values are very easy to combine using an arithmetic average; recall that  $M_K$ is not a valid p-merging function  since the average of p-values is not necessarily a p-value (instead, $2M_K$ is a p-merging function). On the other hand, $M_K$ is an admissible e-merging function which essentially dominates all other symmetric admissible e-merging functions (\cite[Proposition 3.1]{VW20}). 

Another benchmark merging function is the Bonferroni merging function 
$$M_B:(p_1,\dots,p_K)\mapsto \left(K\bigwedge_{k=1}^Kp_k\right)\wedge 1.$$
 The next result shows that $M_B$ is  an admissible p*-merging function.
 The  Bonferroni merging function
$M_B$ is known to be an admissible  p-merging function (\cite[Proposition 6.1]{VW21}), whereas its transformed form (via $e=1/p$) is  an e-merging function but not admissible; see \cite[Section 6]{VW21} for these claims.

\begin{proposition}\label{prop:p*-merg2}
The Bonferroni merging function $M_B$ is an admissible p*-merging function in any dimension $K$.
\end{proposition}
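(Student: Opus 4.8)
The plan is to establish validity through the randomized‑test characterization of p*-variables and admissibility through a direct contradiction argument.

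\textbf{Validity.} Let $P_1,\dots,P_K$ be p*-variables and let $V_\alpha\sim\mathrm U[0,2\alpha]$ be independent of $(P_1,\dots,P_K)$ for $\alpha\in(0,1/2]$. Since $V_\alpha\le 2\alpha\le 1$ almost surely, the event $\{M_B(P_1,\dots,P_K)\le V_\alpha\}$ coincides up to a null set with $\{K\bigwedge_k P_k\le V_\alpha\}=\bigcup_k\{P_k\le V_\alpha/K\}$, so a union bound gives $\p(M_B(P_1,\dots,P_K)\le V_\alpha)\le\sum_{k=1}^K\p(P_k\le V_\alpha/K)$. As $V_\alpha/K\sim\mathrm U[0,2\alpha/K]$ has a decreasing (constant) density on $(0,\infty)$, mean $\alpha/K$, and is independent of $P_k$, Lemma~\ref{lem:1} gives $\p(P_k\le V_\alpha/K)\le\alpha/K$; hence $\p(M_B(P_1,\dots,P_K)\le V_\alpha)\le\alpha$ for all $\alpha\in(0,1/2]$, and $M_B(P_1,\dots,P_K)$ is a p*-variable by Theorem~\ref{th:newdef}. (One could instead verify \eqref{eq:quantile} directly for $(K\bigwedge_k P_k)\wedge1$, but this route is shorter.)

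\textbf{Admissibility.} Suppose some p*-merging function $M$ satisfies $M\le M_B$ with $M(\mathbf p^0)<M_B(\mathbf p^0)$ at a point $\mathbf p^0$. By monotonicity of $M$ and $M_B$ we may first replace every coordinate of $\mathbf p^0$ by $q:=\bigwedge_k p^0_k$ and then, if $q>1/K$, replace $q$ by $1/K$, keeping the strict inequality (the value of $M_B$ is unchanged, resp.\ stays $1$, while $M$ only decreases); so assume $\mathbf p^0=(q,\dots,q)$ with $q\le 1/K$, and write $m_0:=M(\mathbf p^0)$ and $b_0:=M_B(\mathbf p^0)=(Kq)\wedge 1>m_0$. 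Fix $v^*\in(m_0,b_0)$ (so $v^*<1$) and a small $\gamma>0$. Construct $P_1,\dots,P_K$ on a common space: on $K$ disjoint events $B_1,\dots,B_K$ of probability $v^*/K$ each, let $P_k\sim\mathrm U[0,v^*/K)$ and $P_j\equiv L$ (a large constant) for $j\ne k$; on an event $\mathcal A$ of probability $\gamma$, let all $P_k=q$; on the remaining event of probability $1-v^*-\gamma$, let all $P_k\equiv L$. Then $\bigwedge_k P_k$ is $\mathrm U[0,v^*/K)$ on $\bigcup_kB_k$, equals $q$ on $\mathcal A=\{\mathbf P=\mathbf p^0\}$, and equals $L$ elsewhere, so $M_B(\mathbf P)=(K\bigwedge_kP_k)\wedge1$ is $\mathrm U[0,v^*)$ on an event of probability $v^*$, has an atom of size $\gamma$ at $b_0$ on $\mathcal A$, and an atom at $1$; for $\gamma$ small enough one checks via \eqref{eq:quantile} that this is a p*-variable and that its left‑quantile function is the identity on $(0,v^*]$. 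Since $M(\mathbf P)\le M_B(\mathbf P)$ a.s.\ we get $G_{M(\mathbf P)}\le G_{M_B(\mathbf P)}$, hence $G_{M(\mathbf P)}(u)\le u$ for $u\le v^*$; as $M(\mathbf P)$ is a p*-variable, $\int_0^{v^*}G_{M(\mathbf P)}(u)\,\mathrm{d}u\ge v^{*2}/2=\int_0^{v^*}u\,\mathrm{d}u$, which forces $G_{M(\mathbf P)}(u)=u$ a.e.\ on $[0,v^*]$ and so $\p(M(\mathbf P)<v^*)=v^*=\p(M_B(\mathbf P)<v^*)$; combined with $M(\mathbf P)\le M_B(\mathbf P)$ this gives $\{M(\mathbf P)<v^*\}=\{M_B(\mathbf P)<v^*\}$ a.s. But on $\mathcal A$, of probability $\gamma>0$, we have $M(\mathbf P)=m_0<v^*$ while $M_B(\mathbf P)=b_0>v^*$ — a contradiction. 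Hence $M_B$ is admissible.

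The main obstacle is engineering the admissibility construction and its bookkeeping: one must choose $v^*$ and $\gamma$ so that (i) $M_B(\mathbf P)$ has precisely the borderline uniform shape on $[0,v^*)$ — which is where the $K$ disjoint uniform ramps, i.e.\ the Bonferroni factor $K$, are saturated — while simultaneously (ii) each marginal $P_k$, carrying one of those ramps together with the shared atom at $q$, still satisfies \eqref{eq:quantile}; the latter reduces to an inequality of the form $\gamma\le 2(q-v^*/K)$ together with a mean condition on $M_B(\mathbf P)$, all satisfiable because $v^*<b_0$, but the regimes $b_0<1$ and $b_0=1$ and the full range $q\in(0,1/K]$ must be tracked. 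Everything else — the reduction, the quantile computations, the forcing step — is routine once the construction is fixed.
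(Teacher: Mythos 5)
Your proof is correct, but it takes a genuinely different route from the paper's in both halves. For validity, the paper applies the Bonferroni inequality to the minimum, passes to the mixture p*-variable, and verifies \eqref{eq:quantile} for $KP_{(1)}$ by a change of variables in the quantile integral; you instead combine a union bound with Lemma \ref{lem:1} (applied to $V_\alpha/K\sim \mathrm U[0,2\alpha/K]$) and invoke the randomized-test characterization of Theorem \ref{th:newdef} — a slicker argument that showcases how that characterization can be used as a verification tool, at the cost of relying on the richness of the probability space to supply the independent thresholds. For admissibility, the paper fixes a diagonal point $p\le 1/K$ with $M(p,\dots,p)<Kp$, builds explicit finitely-valued p*-variables (with a case split $2Kp\le 1$ versus $2Kp>1$), and computes directly that $M(P_1,\dots,P_K)$ violates \eqref{eq:quantile}; you build a single construction in which $K$ disjoint uniform ramps make the quantile function of $M_B(\mathbf P)$ exactly the identity up to level $v^*$, and then run a rigidity argument: since $M\le M_B$ forces $G_{M(\mathbf P)}(u)\le u$ on $(0,v^*]$ while \eqref{eq:quantile} forces the integral up to $v^*$ to be at least $v^{*2}/2$, the sub-$v^*$ events of $M(\mathbf P)$ and $M_B(\mathbf P)$ must coincide a.s., contradicting the atom $\mathcal A$. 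This avoids the paper's case split entirely (your constraint $\gamma\le 2(q-v^*/K)$, together with $v^*+\gamma\le 1$ and $L\ge 1$, is satisfiable for every $q\in(0,1/K]$), and it isolates the conceptual reason $M_B$ cannot be improved: any improvement below a saturated level breaks the integral constraint. Two small tidying remarks: the claim that $M_B(\mathbf P)$ is itself a p*-variable is not needed for your argument (and in any case follows automatically from the validity half, with no smallness condition on $\gamma$) — only the identity form of its quantile function on $(0,v^*]$ matters; and the "mean condition on $M_B(\mathbf P)$" you mention at the end is likewise unnecessary, since the only genuine constraints are the one on the marginals $P_k$ and the total-probability bound.
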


Combining Propositions \ref{prop:p*-merg} and \ref{prop:p*-merg2}, 
  p*-merging is admissible via both the arithmetic average (admissible for e-merging, invalid for p-merging) and the Bonferroni correction (admissible for p-merging, inadmissible for e-merging).


%

\subsection{Independent p*-variables}

We next turn to the problem of merging independent p*-variables. 
Merging independent mid p-values is studied by \citet{RHL19} based on arguments of convex order. Since our p*-variables are defined via the order $\ge_2$ which is closely related to convex order,  the bounds in \cite{RHL19} can be directly applied to the case of p*-variables. More precisely, for any p*-variable $P$,  using Strassen's theorem   in the form of  \cite[Theorems 4.A.5 and 4.A.6]{SS07},
there exists a random variable $Z$ such that 
$  Z\le P$ and $Z$ satisfies the convex order relation used in \cite{RHL19}.
In particular, for the arithmetic average $\bar P$ of \emph{independent} p*-variables, using \cite[Theorem 1]{RHL19} leads to the probability bound
\begin{equation}\label{eq:ind}
\p(\bar P \le 1/2-t) \le \exp(-6Kt^2) \mbox{~~~~for all $t\in [0,1/2]$.}
\end{equation}
Another  probability bound for the geometric average of independent p*-variables  is obtained by \cite[Theorem 2]{RHL19}  based on the observation that twice a p*-variable is a p-variable (cf.~Proposition \ref{prop:trivial}). Recall that Fisher's combination method uses the geometric average  of independent p-values.

It is well-known that  statistical validity  
of  Fisher's method or other methods based on concentration inequalities 
can be fragile when independence does not hold; see also our simulation results in Section \ref{sec:r1-3}.
Since independence is difficult to verify in multiple hypothesis testing (see e.g., \cite{E10}), these independence-based methods (for either p-values or p*-values) need to be applied with caution. 

There are, nevertheless, some methods which work well for independent p-values and are relatively robust to dependence assumptions.
In addition to the Bonferroni correction which is valid for all dependence structures, the most famous  such method is perhaps that of \citet{S86}.
Define the function
$$
S_K:[0,\infty)^K \to [0,\infty),~ S_K(p_1,\dots,p_k)=\bigwedge_{k=1}^K\frac {K p_{(k)}}{k},
$$
where $p_{(k)}$ is the $k$-th smallest order statistic of $p_1,\dots,p_K$.
A celebrated result of \cite{S86} is that 
 if $P_1,\dots,P_K$ are independent p-variables, then 
 the Simes inequality holds
\begin{equation}
\label{eq:Simesineq}
\p( S_K(P_1,\dots,P_K)  \le \alpha)\le \alpha~~~\mbox{for all $\alpha \in (0,1)$}.
\end{equation}
Further, if $P_1,\dots,P_K$ are iid uniform on $[0,1]$, then $S_K(P_1,\dots,P_K)$ is again uniform on $[0,1]$.  
 The Simes inequality \eqref{eq:Simesineq} holds also under some notion of positive dependence, in particular,   positive regression dependence (PRD); see \citet{BY01} and \citet{RBWJ19}.


%

One may wonder whether  $S_K(P_1,\dots,P_K)$ yields a p*-variable or p-variable for  independent or PRD p*-variables $P_1,\dots,P_K$. It turns out that this is not the case, as illustrated by the following example,
where $S_2(P_1, P_2)$ fails to be a p*-variable or a p-variable, even in case that $P_1$ and $P_2$ are iid p*-variables.

\begin{example}
Let $P_1$ be a random variable satisfying $\p(P_1=0.2)=0.4$ and $\p(P_1=0.7)=0.6$. 
It is straightforward to verify that $P_1$ is a p*-variable (indeed, it is a mid p-variable by Theorem \ref{th:mid-p}).
Let $P_2$ be an independent copy of $P_1$ and $P:=S_2(P_1,P_2)$. 
We can check that 
$\p(P=0.2)=0.16$, $\p(P=0.4)=0.48$ and $\p(P=0.7)=0.36$. 
It follows that $\E[P]=0.476<1/2$, and hence $P$ is not a p*-variable.
\end{example}
 
 Since twice a p*-variable is a p-variable (Proposition \ref{prop:trivial}),  
it is safe (and conservative) to use $2S_K(P_1,\dots,P_K)$  which is a p-variable under independence or PRD (note that PRD is preserved under linear transformations). 

Other methods on p-values that are relatively robust to dependence include the harmonic mean p-value of \citet{W19} and the Cauchy combination method of \citet{LX20}.
As shown by \citet{CLTW22}, the three methods of Simes, harmonic mean, and Cauchy combinations are closely related and  similar in several senses. 

Obviously,  more robustness  to dependence leads to a more conservative method. 
Indeed, all p-merging methods designed for arbitrary dependence are quite conservative in some situations; see the comparative study in \cite{CLTW22}.
Thus, there is a trade-off between power and robustness to dependence.
For p*-merging methods, the bound \eqref{eq:ind} is the most stringent on the independence assumption.
Using $2S_K$ is valid for independent or PRD p*-variables.
Finally, all methods in Section \ref{sec:arbitrary} work for any dependence structure among the p*-variables.

\begin{remark}
Any function $M$ which merges iid standard uniform random variables $U_1,\dots,U_K$  into a standard uniform one, such as the functions in the methods of Simes, Fisher's and the Cauchy combination,
satisfies
\begin{equation}
\label{eq:sharp}
\p(M(P_1,\dots,P_K)\le \alpha)\le \alpha \mbox{~~for all $\alpha \in(0,1)$}
\end{equation}
for any independent p-variables $P_1,\dots,P_K$.
However, they generally cannot satisfy \eqref{eq:sharp}
for all independent p*-variables (or mid p-variables) $P_1,\dots,P_K$, since  $M(P_1,\dots,P_K)\ge_1 M(U_1,\dots,U_K)$ does not hold for some choices of $P_1,\dots,P_K$. Therefore, some form of penalty always needs to be paid when relaxing p-values to p*-values  or mid p-values for these methods.
\end{remark}
 
  \begin{remark}
 The function $S_K$ and the inequality \eqref{eq:Simesineq} play a central role in multiple hypothesis testing and false discovery rate (FDR) control; in particular, the procedure of \citet{BH95} at level $\alpha$
reports at least one discovery for p-values $p_1,\dots,p_K$ if and only if 
 $ S_K(p_1,\dots,p_K) \le  \alpha $, and \eqref{eq:Simesineq} guarantees the FDR of this procedure is no longer than $\alpha$ in the global null setting with independent p-values.
 \end{remark}

\section{Calibration between  p-values,  p*-values, and e-values}
\label{sec:5}

In this section, we discuss calibration between p-, p*-, and e-values. 
Calibration between p-values and e-values is one of the main topics of \cite{VW21}.  

\subsection{Calibration between  p-values and p*-values}
Calibration between p-values and p*-values is relatively simple. 
A \emph{p-to-p* calibrator} is an increasing function $f:[0,1]\to [0,\infty)$ that transforms p-variables to p*-variables,
and a \emph{p*-to-p calibrator} is an increasing function $g:[0,1]\to [0,\infty)$ which transforms in the reverse direction. 
Clearly, the values of p-values larger than $1$ are irrelevant, and  hence we restrict the domain of all calibrators in this section  to be $[0,1]$; in other words, input p-variables and p*-variables larger than $1$ will be treated as $1$.
A calibrator is said to be \emph{admissible} if it is not strictly dominated by another calibrator of the same kind (for calibration to p-values and p*-values,  $f$ dominates $g$ means $f \le g$, and for calibration to  e-values in Section \ref{sec:6} it is the opposite inequality).

%

\begin{theorem}\label{prop:calibrator}
\begin{enumerate}[(i)] 
\item 
The  p*-to-p calibrator $u\mapsto (2u)\wedge 1$ dominates all other p*-to-p calibrators.
\item 
An increasing function $f$ on $[0,1]$ is an admissible p-to-p* calibrator if and only if $f$ is left-continuous, $f(0)=0$, $\int_0^v f(u) \d u  \ge  v^2/2 $ for all $v\in (0,1)$, and $\int_0^1 f(u) \d u  = 1/2 $.
\end{enumerate}
\end{theorem}

Theorem \ref{prop:calibrator} (i) states that a multiplier of $2$ is the best calibrator that works for all p*-values. 
This observation justifies  the deterministic threshold $\alpha/2$ in the test  \eqref{eq:halfalpha} for p*-values, as mentioned in Section \ref{sec:pstar-test}.   
Although Theorem \ref{prop:calibrator} (ii) implies that there are many admissible p-to-p* calibrators, it seems that there is no obvious reason to use anything other than the identity in Proposition \ref{prop:trivial} when calibrating from p-values to p*-values. 
Finally, we note that the conditions in  Theorem \ref{prop:calibrator} (ii)  imply that the range of $f$ is contained in $[0,1]$, an obvious requirement for an admissible p-to-p* calibrator.

\subsection{Calibration between p*-values and e-values}
\label{sec:6}

Next, we discuss calibration between e-values and p*-values, which has a richer structure.
A \emph{p*-to-e calibrator} is a  decreasing function $f:[0,1]\to [0,\infty]$ that transforms p*-variables to e-variables,
and an \emph{e-to-p* calibrator} $g:[0,\infty]\to [0,1]$  is a  decreasing function which  transforms in the reverse direction.  We include $e=\infty$ in the calibrators, which corresponds to $p=0$.


First,   since a p-variable is a p*-variable,
any p*-to-e calibrator is also a p-to-e calibrator. 
Hence, the set of p*-to-e calibrators is contained in the set of p-to-e calibrators. 
By Proposition 2.1 of \cite{VW21}, 
any admissible p-to-e calibrator $f:[0,1]\to [0,\infty]$ is a decreasing  function such that $f(0)=\infty$, $f$ is left-continuous, and $\int_0^1 f(t) \d t=1$. We will see below that some of these admissible p-to-e calibrators are also p*-to-e calibrators.

Regarding the other direction of e-to-p* calibrators, we first recall that there is a unique admissible e-to-p calibrator,  given by  $e\mapsto  e^{-1}\wedge 1$, as shown by \cite{VW21}.
Since the set of p*-values is larger than that of p-values, the above e-to-p calibrator is also an e-to-p* calibrator. The interesting questions are whether there is any e-to-p* calibrator stronger than $e\mapsto e^{-1}\wedge 1$, and whether an admissible e-to-p* calibrator is also unique.  
The constant map $e\mapsto 1/2$ is an e-to-p* calibrator since $1/2$ is a constant p*-variable. If there exists an  e-to-p* calibrator $f$ which dominates all other e-to-p* calibrators, then it is necessary that $f(e)\le 1/2$ for all $e\ge 0$; however this would imply $f=1/2$ since any p*-variable has mean at least $1/2$. Since $e\mapsto 1/2$ does not dominate $e\mapsto e^{-1}\wedge 1$, we conclude that there is no  e-to-p* calibrator which dominates all others, in contrast to the case of e-to-p calibrators.

Nevertheless, some refined form of domination can be helpful.
We say that an  e-to-p* calibrator $f$ \emph{essentially dominates} 
another e-to-p* calibrator $f'$ if  $f(e)\le f'(e)$ whenever $f'(e)<1/2$.
That is, we only require dominance when the calibrated p*-value is useful (relatively small);
this consideration is similar to the essential domination of e-merging functions in \cite{VW21}.  
It turns out that the e-to-p calibrator $e\mapsto e^{-1}\wedge 1$ can be improved by a factor of $1/2$, which essentially dominates all other e-to-p* calibrators.

The following theorem summarizes the validity and admissibility results on both directions of calibration.  

\begin{theorem}\label{prop:pstar-e}
\begin{enumerate}[(i)]
\item 
A convex (admissible) p-to-e calibrator is an (admissible) p*-to-e calibrator.
\item  
An admissible p-to-e calibrator is a p*-to-e calibrator if and only if it is convex.
\item The e-to-p* calibrator $e\mapsto  (2e)^{-1}\wedge 1$ essentially dominates all other  e-to-p* calibrators.
\end{enumerate}
\end{theorem}

All practical examples of p-to-e calibrators  are convex and admissible; see \cite[Section 2 and Appendix B]{VW21} for a few classes (which are all convex). 
 By Theorem \ref{prop:pstar-e}, all of these calibrators are admissible p*-to-e calibrators. A popular class of p-to-e calibrators is  given by, for   $\kappa\in (0,1)$, 
\begin{equation}\label{eq:pecali}
p\mapsto \kappa p^{\kappa-1},~~~~p\in [0,1].
\end{equation}
Another simple choice, proposed by \citet{S20}, is 
\begin{equation}\label{eq:shafer}
p\mapsto   p^{-1/2}-1,~~~~p\in [0,1].
\end{equation}
Clearly, the  p-to-e calibrators  in \eqref{eq:pecali} and \eqref{eq:shafer} are convex and thus they are p*-to-e calibrators.



The result in Theorem \ref{prop:pstar-e} (iii)   shows that the unique admissible e-to-p calibrator 
$e\mapsto e^{-1}\wedge 1$ can actually be achieved by a two-step calibration:
first use $p^*=  (2e)^{-1}\wedge 1$ to get a p*-value, and 
then use $p=(2p^*)\wedge 1$ to get a p-value.

On the other hand, all p-to-e calibrators $f$ in \cite{VW21} are convex, and they can be seen as a composition of the calibrations $p^*=p$ and $e=f(p^*)$. 
Therefore, p*-values serve as an intermediate step in both directions of calibration between p-values and e-values, although one of the directions is less   interesting since the p-to-p* calibrator is an identity.  
Figure \ref{fig:1} in the Introduction illustrates our recommended calibrators among  p-values, p*-values and e-values based on Theorems \ref{prop:calibrator} and \ref{prop:pstar-e},
and they are all admissible.

\begin{example}\label{ex:pecali}
Suppose that $U$ is uniformly distributed on $[0,1]$. Using the calibrator \eqref{eq:pecali}, for $\kappa \in (0,1)$,  $E:=\kappa U^{\kappa-1}$ is an e-variable.
By Theorem \ref{prop:pstar-e} (iii)  , we know that $P:=(2E)^{-1} $ is a p*-variable. 
Below we check this directly. 
 The left-quantile function $G_P$  of $P $ satisfies
$$ 
G_P(u)= \frac{u^{1-\kappa}}{2\kappa },~~~~u\in (0,1).
$$
Using $\kappa (2-\kappa) \le 1$ for all $\kappa \in (0,1)$, we have
$$
\int_0^v G_P(u) \d u=\frac{v^{2-\kappa}}{2\kappa(2-\kappa)} \ge \frac{v^{2-\kappa}}2\ge \frac{v^2}2,~~~~v\in (0,1).
$$
Hence,  $P$ is a p*-variable by   verifying \eqref{eq:quantile}.
Moreover, for $\kappa \in (0,1/2]$, $P$ is even a p-variable, since $G_P(u)\ge u$ for $u\in (0,1)$.
\end{example}

In the next result, we show that   a  p*-variable  obtained from the calibrator in Theorem \ref{prop:pstar-e} (iii)    
is a p-variable under   a further condition (DE):
    \begin{enumerate}
    \item[(DE)]  $E\le_1 E'$ for some e-variable $E'$ which has a decreasing density on $(0,\infty)$.
    \end{enumerate}
    In particular, condition (DE) is satisfied if $E$ itself has a decreasing density on $(0,\infty)$. 
    Examples of e-variables satisfying (DE) are those obtained from applying a non-constant convex p-to-e calibrator $f$ with $f(1)=0$ to any p-variable, e.g., the p-to-e calibrator \eqref{eq:shafer} but not \eqref{eq:pecali}; this is because convexity of the calibrator yields a decreasing density when applied to a uniform p-variable. 
\begin{proposition}\label{prop:1over2e}
For any e-variable $E$, $P:=(2E)^{-1}\wedge 1 $ is a p*-variable,
and if $E$ satisfies (DE), then $P$ is a p-variable.
\end{proposition}

  \begin{remark}
 In a spirit similar to Proposition \ref{prop:1over2e},   smoothing techniques leading to an extra factor of $2$ in the Markov inequality have been studied by \citet{H19}. 
 \end{remark}
 
%

\section{Testing with e-values and  martingales}
\label{sec:72}

    In this section we discuss  applications of  p*-values to tests with e-values and martingales. E-values and test martingales are usually used for purposes more than rejecting a null hypothesis while controlling type-I error; in particular, they offer anytime validity and different interpretations of statistical evidence (e.g., \cite{GDK20}). We compare the power of several methods here for a better understanding of their performance, while keeping in mind that single-run detection power (which is maximized by p-values if they are available) is not the only purpose of e-values.
    
Suppose that $E$ is an e-variable, usually obtained from likelihood ratios or stopped test supermartingales (e.g., \cite{SSVV11}, \cite{S20}). 
A traditional e-test is 
  \begin{align} \label{eq:e-test}
 \mbox{rejecting the null hypothesis}   ~\Longleftrightarrow~    E  \ge  \frac{1}{\alpha}.
     \end{align}  
Using the fact that $ (2E)^{-1}$ is a p*-variable in Theorem \ref{prop:pstar-e} (iii)  , we can design the randomized test 
  \begin{align} \label{eq:e-test1p}
 \mbox{rejecting the null hypothesis}   ~\Longleftrightarrow~   2E  \ge  \frac{1}{V},
     \end{align} 
     where $V\sim \mathrm{U}[0,2\alpha]$ is independent of $E$ (Proposition \ref{th:newdef}).
   The test  \eqref{eq:e-test1p}  has $3/4$ chance of being more power than the traditional choice of testing $E$ against $1/\alpha$ in \eqref{eq:e-test}. Randomization is undesirable, but \eqref{eq:e-test1p} inspires us to look for alternative deterministic methods.
     
Suppose that one has two independent e-variables $E_1$ and $E_2$ for a null hypothesis.
     As shown by \cite{VW21}, it is  optimal in a weak sense to use the combined e-variable $E_1E_2$ for testing the null.    Assume further that one of $E_1$ and $E_2$ satisfies condition (DE).  

Using \eqref{eq:e-test1p} with the random threshold $\alpha E_2$ and Proposition \ref{prop:storder} in Appendix \ref{sec:testing}, we get 
    $ \p ((2E_1)^{-1} \le \alpha E_2 ) \le \alpha $ (note that the positions of $E_1$ and $E_2$ are symmetric here).
    Hence,  the test 
      \begin{align} \label{eq:e-test2}
 \mbox{rejecting the null hypothesis}   ~\Longleftrightarrow~   2E_1E_2  \ge  \frac{1}{\alpha},
     \end{align} 
     has  size at most $\alpha$. 
    The threshold of the test \eqref{eq:e-test2} is half  the one obtained by directly applying \eqref{eq:e-test} to the e-variable $E_1E_2$. Thus, the test statistic is boosted by a factor of $2$ via   condition (DE) on either $E_1$ or $E_2$.
No assumption is needed for  the other e-variable.
In particular, by setting  $E_2=1$, we get a p-variable $(2E_1)^{-1}$  if $E_1$ satisfies (DE), as we see in Proposition \ref{prop:1over2e}.
     
E-values calibrated from p-values are useful in the context of testing randomness online (see \cite{V20}) and designing test martingales  (see \cite{DRBW19}).
More specifically,
for a possibly infinite sequence of independent p-variables $(P_t)_{t\in \N}$ and a sequence of p-to-e calibrators $(f_t)_{t\in \N}$, 
the stochastic process 
$$
X_t= \prod_{k=1}^t f_k (P_k),~~~~t=0,1,\dots
$$
is a supermartingale (with respect to the filtration of $(P_t)_{t\in \N}$) with initial value $X_0=0$ (it is a martingale if $P_t$, $t\in \N$ are standard uniform and $f_t$, $t\in \N$ are admissible).  
As a supermartingale, $(X_t)_{t\in \N}$ satisfies the anytime validity, i.e.,
$X_\tau$ is an e-variable for any stopping time $\tau$; moreover, Ville's inequality gives
\begin{align}\label{eq:e-test-max}
\p\left(\sup_{t\in \N} X_t \ge \frac 1 \alpha\right)\le \alpha \mbox{~for any  $\alpha>0$.}
\end{align}
The process $(X_t)_{t\in \N}$ is called an \emph{e-process} by \cite{WR22}.
 Anytime validity is crucial in the design of online testing where evidence arrives sequentially in time, and scientific discovery is reported at a stopping time     considered with sufficient evidence. 

Notably, the most popular choice of p-to-e calibrators   are those in \eqref{eq:pecali} and \eqref{eq:shafer} (see e.g., \cite{V20}), which are convex. Theorem \ref{prop:pstar-e} implies that
if the inputs  are not p-values but p*-values, we can still obtain e-processes using convex calibrators such as \eqref{eq:pecali} and \eqref{eq:shafer}, without calibrating these p*-values to p-values. This observation becomes useful when each observed $P_t$ is only a p*-variable, e.g., a mid p-value  or an average of several p-values from parallel experiments.

Moreover, for a fixed $t\in \N$, if there is a convex $f_s$ for some $s\in\{1,\dots,t\}$ with $f_s(1)=0$, and $P_s$ is a p-variable (the others can be p*-variables with any   p*-to-e calibrators),
then  (DE) is satisfied by $f_s(P_s)$, and we have $\p(X_t\ge 1/\alpha)\le \alpha /2$ by using the test \eqref{eq:e-test2}; see our numerical experiments below.
     

    \subsection*{Simulation experiments}

      In the simulation results below,
      we generate  test martingales  following \cite{VW21}.
      Similarly to Section \ref{sec:71}, the null hypothesis   $H_0$ is $ \mathrm{N}(0,1)$ and the alternative is $ \mathrm{N}(\delta,1)$ for some $\delta>0$. We generate iid $X_1,\dots,X_n$ from $\mathrm{N}(\delta,1)$. 
Define the e-variables from the likelihood ratios of the alternative to the null density,
\begin{equation}\label{eq:base-e}
  E_t
  :=
  \frac{\exp(-(X_t-\delta)^2/2)}{\exp(-X_t^2/2)}
  =
  \exp(\delta X_t - \delta^2/2),~~~t=1,\dots,n.
\end{equation}
The e-process $S=(S_t)_{t=1,\dots,n}$ is defined as 
$
S_t=\prod_{s=1}^t E_s. 
$
Such an e-process $S$ is \emph{growth optimal} in the sense of \citet{S20}, as it maximizes the expected log growth among all test martingales built on the data $(X_1,\dots,X_n)$; indeed, $S$ is Kelly's strategy under the betting interpretation. Here, we constructed the e-process $S$  assuming that we know $\delta$; otherwise we can use universal test martingales (e.g., \cite{HRMS20}) by taking a mixture of $S$ over $\delta$ under some probability measure.

Note that each $E_t$ is log-normally distributed and it does not satisfy (DE). Hence, \eqref{eq:e-test2} cannot be applied to $S_n$.
Nevertheless, we can replace $E_1$ by  another e-variable $E_1'$
which satisfies (DE).  We choose $E_1'$ by applying the p-to-e calibrator \eqref{eq:shafer} to the p-variable $P_1=  1-  \Phi(X_1) $, namely,
$
E_1'= (P_1)^{-1/2}-1.
$   
 
 Replacing $E_1$ by $E_1'$, we obtain the new e-process $S'=(S'_t)_{t=1,\dots,n}$ by 
 $
S'_t=E_1'\prod_{s=2}^t E_s. 
$
The e-process $S'$ is not growth optimal, but 
as $E_1'$ satisfies (DE), we can test via the rejection condition $2 S'_n \ge 1/\alpha$, thus boosting the terminal value by a factor of $2$. Let $V\sim \mathrm{U}[0,2\alpha]$ be independent of the test statistics.
We compare five different tests, all with size at most $\alpha$:
\begin{enumerate}[(a)]
\item applying \eqref{eq:e-test}   to $S_n$: reject $H_0$ if $ S_n \ge 1/\alpha$ (benchmark case);
\item applying \eqref{eq:e-test1p} to $S_n$: reject  $H_0$  if $2  S_n \ge 1/V $;
\item applying \eqref{eq:e-test2}   to $S'_n$: reject $H_0$  if $2 S'_n \ge 1/\alpha$;
\item applying a combination of  \eqref{eq:e-test1p}  and \eqref{eq:e-test2}   to $S'_n$: reject  $H_0$ if $2 S'_n \ge 1/V$;
\item applying \eqref{eq:e-test-max} to the   maximum of $S$: reject  $H_0$ if $ \max_{1\le t\le n} S_t \ge 1/V$.
\end{enumerate}
Since test (a) is strictly dominated by test (e), we do not need to use (a) in practice; nevertheless we treat it as a benchmark for comparison on tests based on e-values as it is built on the fundamental connection between e-values and p-values: the e-to-p calibrator $e\mapsto e^{-1}\wedge 1$.

The significance level  $\alpha$ is set to be $ 0.01$. 
The power of the five tests 
is  computed from the average of 10,000 replications for varying signal strength $\delta$ and for $n\in \{2, 10,100\}$. Results are reported in Figure \ref{fig:2}. 
For most values of $\delta$ and $n$, either the deterministic test (c) for $S'$ or the maximum test (e) has the best performance.
The deterministic test (c) performs very well in the cases $n=2$ and $n=10$, especially for weak signals; this may be explained by the factor of $2$ being substantial when the signal is weak.
If $n$ is large and the signal is not too weak,   the effect of using the maximum of $S$ in (e) is dominating; this is not surprising.  
Although the randomized test (b) usually improves the performance from the benchmark case (a),
the advantages seem to be quite limited, especially in view of the extra randomization, often undesirable.   
\begin{figure}[t]
\begin{center}
 \makebox[\textwidth]{\includegraphics[width=4.2cm, trim=0 40 20 20, clip]{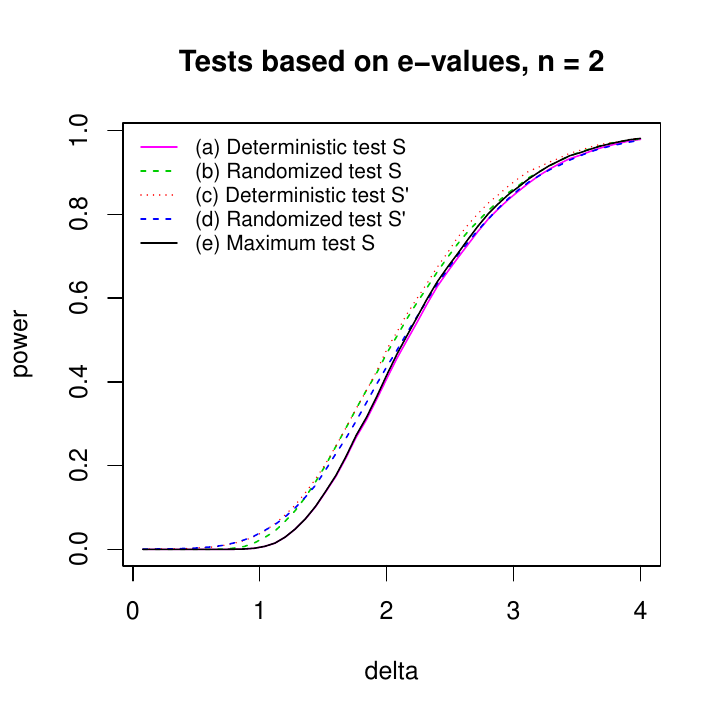}
 \includegraphics[width=4.2cm, trim=0 40 20 20, clip]{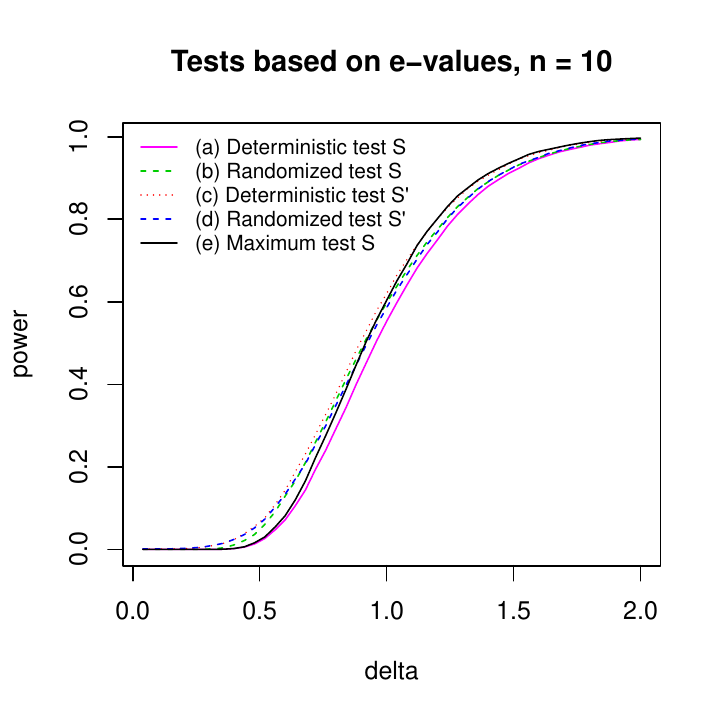}
  \includegraphics[width=4.2cm, trim=0 40 20 20, clip]{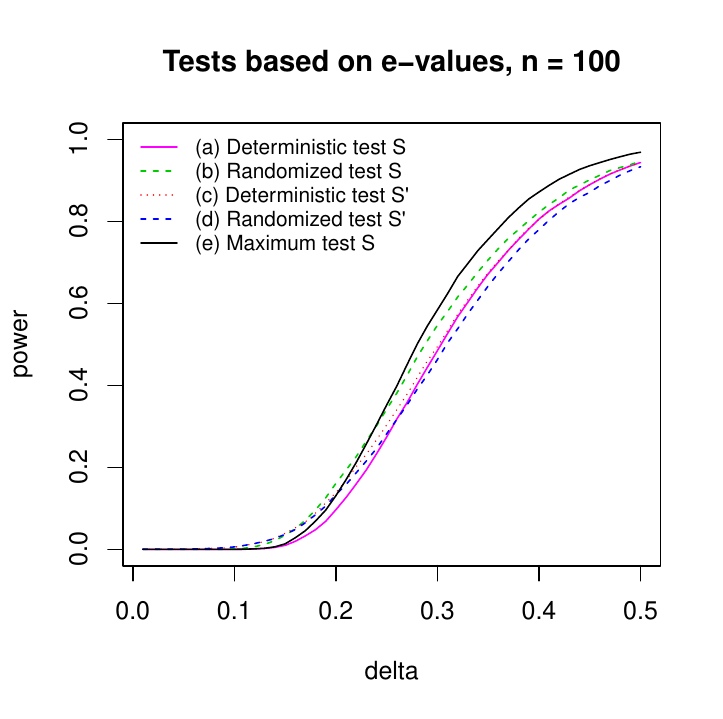}} \\
   \makebox[\textwidth]{\includegraphics[width=4.2cm, trim=0 10 20 50, clip]{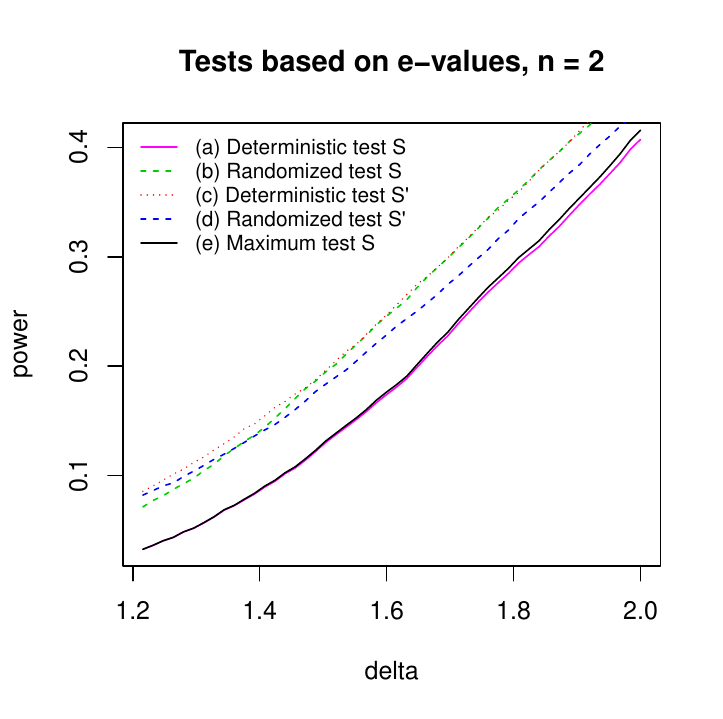}
 \includegraphics[width=4.2cm, trim=0 10 20 50, clip]{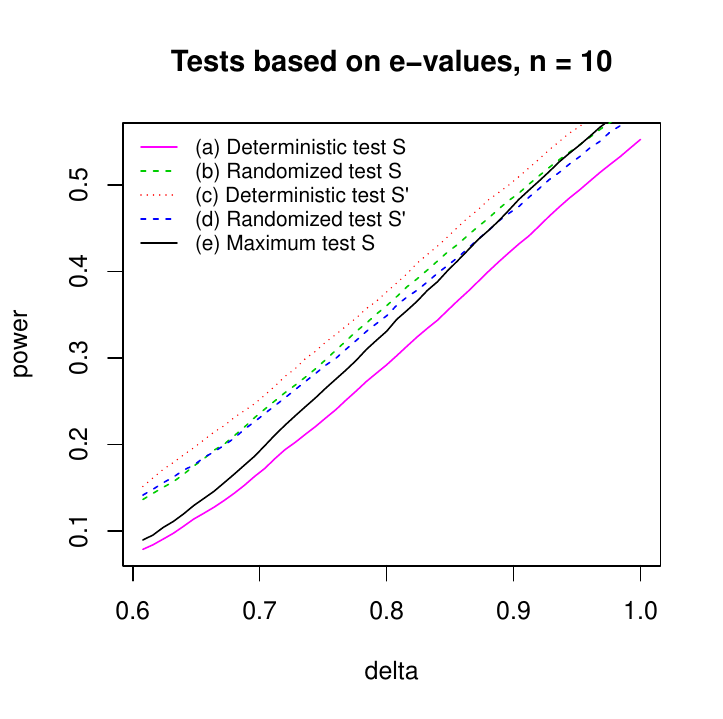}
  \includegraphics[width=4.2cm, trim=0 10 20 50, clip]{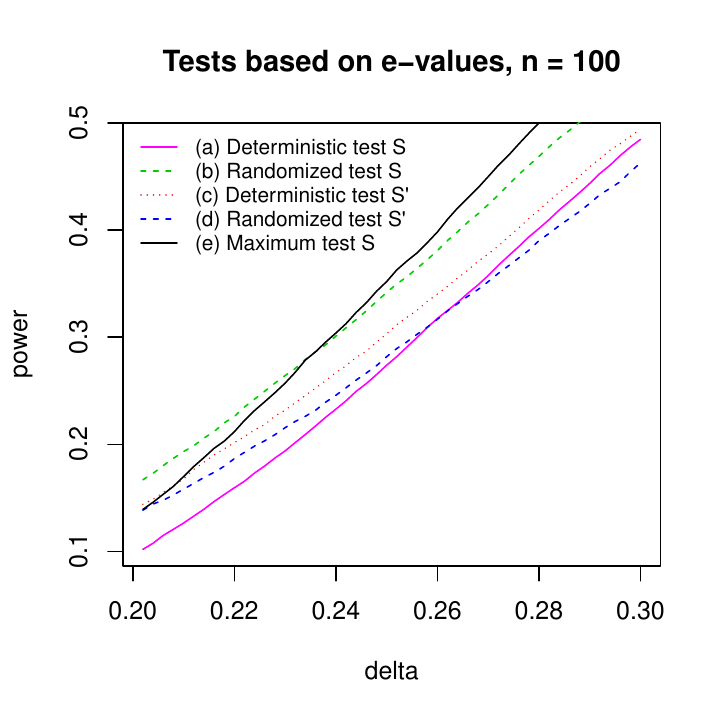}}
\caption{Tests based on e-values; the second row is zoomed in from the first row}
\label{fig:2}
\end{center}
\end{figure}

\section{Testing with combined mid p-values}
\label{sec:r1-3}
We compare  by simulation the performance of a few tests via merging mid p-values. 
The (global) null hypothesis $H_0$ is that the test statistic $T$ follows
a binomial distribution  $\mathrm{Binomial}(n,\pi)$, and $K$ tests are conducted. 
We set $n=40$ and $\pi=0.3$, so that the obtained p-values are 
considerably discrete. 
We denote by  $P_1,\dots,P_K$ the obtained p-values via  \eqref{eq:exceedance1},
and $P^*_1,\dots,P^*_K$ the obtained mid p-values via \eqref{eq:mid}.
Let $\bar P^*$ and $\tilde P^*$ be the arithmetic average and the geometric average of $
 P^*_1,\dots,P^*_K$, respectively. 
 
The true data probability generating 
the test statistics is a binomial distribution $\mathrm{Binomial}(n,(1-\theta)\pi)$,
where $\theta\in [0,1]$. The case $\theta=0$ means that the null hypothesis $H_0$ is true, and 
a larger $\theta$ indicates a stronger signal. 
 
 We allow the test statistics to be correlated, and this is achieved by simulating from a Gaussian copula with common pairwise correlation parameter $\rho$ (more precisely, we first simulate from a Gaussian copula, and then obtain the observable discrete test statistics by a quantile transform).  We consider the following tests (there are   other tests possible for this setting, and we only compare these four to illustrate a few relevant points):
\begin{enumerate}[(a)]
\item the probability bound  \eqref{eq:ind} on the arithmetic mean  in \cite{RHL19}: reject $H_0$ if $ \bar P^* \le  1/2-(-\log(\alpha)/(6 K))^{1/2} $;
\item the arithmetic mean times $2$ using Proposition \ref{prop:p*-merg}: reject $H_0$ if $ \bar P^*\le \alpha/2$;
\item the geometric average of p*-values using Theorem \ref{th:geom}: reject  $H_0$  if $ \tilde P^* \le  \alpha/\mathrm{e}$;
\item the Bonferroni correction: reject $H_0$ if $ \min(P_1,\dots,P_K) \le  \alpha/K$.
\end{enumerate}
Note that tests (a), (b) and (c) use mid p-values  based on methods for p*-values, and (d) uses p-values. 
All of (b), (c) and (d) are valid tests under arbitrary dependence (AD) whereas the validity of (a) requires independence.
Therefore, we expect (a) to perform very well in case independence holds. All other methods are  valid but conservative, as there is a big price to pay to gain robustness against all dependence structures.

The significance level  $\alpha$ is set to be $ 0.05$ for good visibility.
The power of the four tests is reported in Figure \ref{fig:r1-1} which 
is  computed from the average of 10,000 replications for varying signal strength $\theta\in [0,1]$  and  for $\rho=0$ (independence), $\rho=0.2$ (mild dependence) and $\rho=0.8$ (strong dependence). The situation of $\rho=0.8$ is the most relevant to us as   averaging methods are designed mostly for situations where  the presence of strong or complicated dependence is suspected. 
\begin{figure}[t]
\begin{center}
 \makebox[\textwidth]{\includegraphics[width=4.2cm, trim=0 40 20 20, clip]{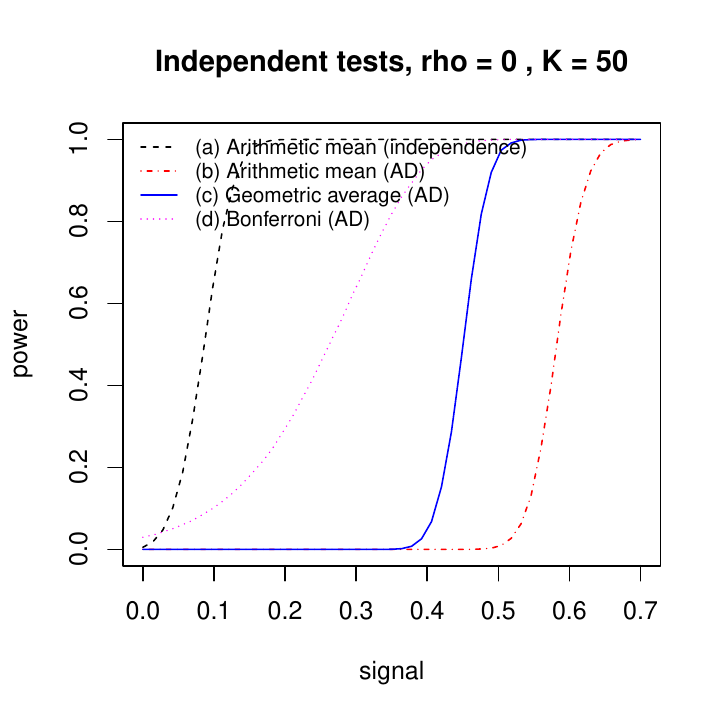}
 \includegraphics[width=4.2cm, trim=0 40 20 20, clip]{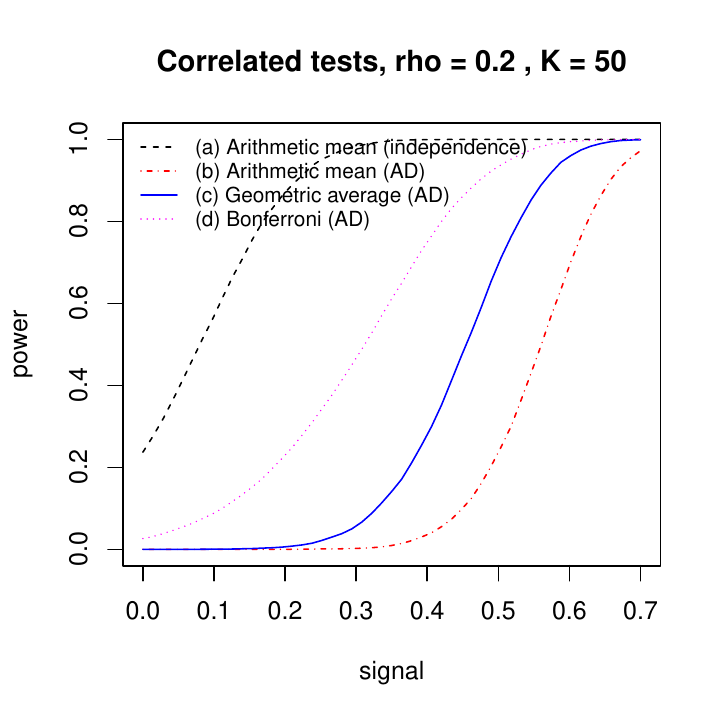}
  \includegraphics[width=4.2cm, trim=0 40 20 20, clip]{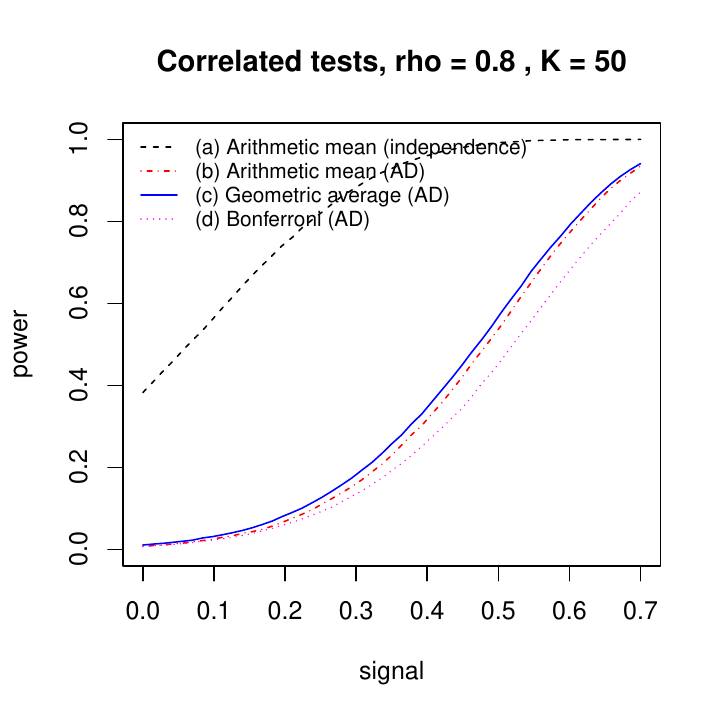}} \\
 \makebox[\textwidth]{\includegraphics[width=4.2cm, trim=0 10 20 0, clip]{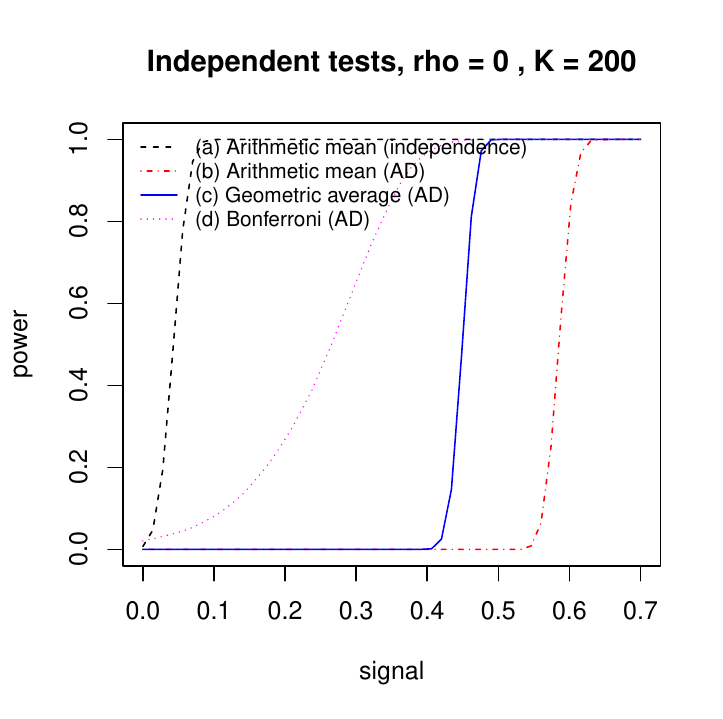}
 \includegraphics[width=4.2cm, trim=0 10 20 0, clip]{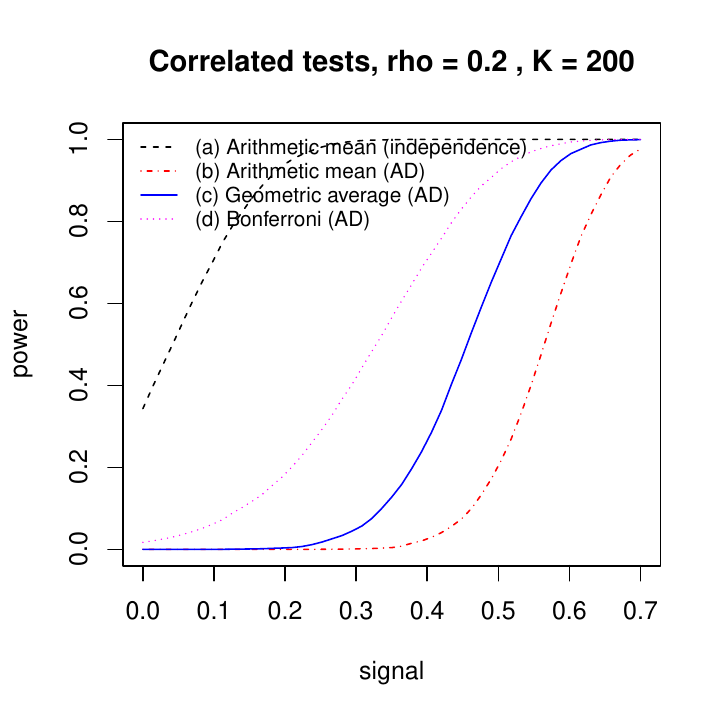}
  \includegraphics[width=4.2cm, trim=0 10 20 0, clip]{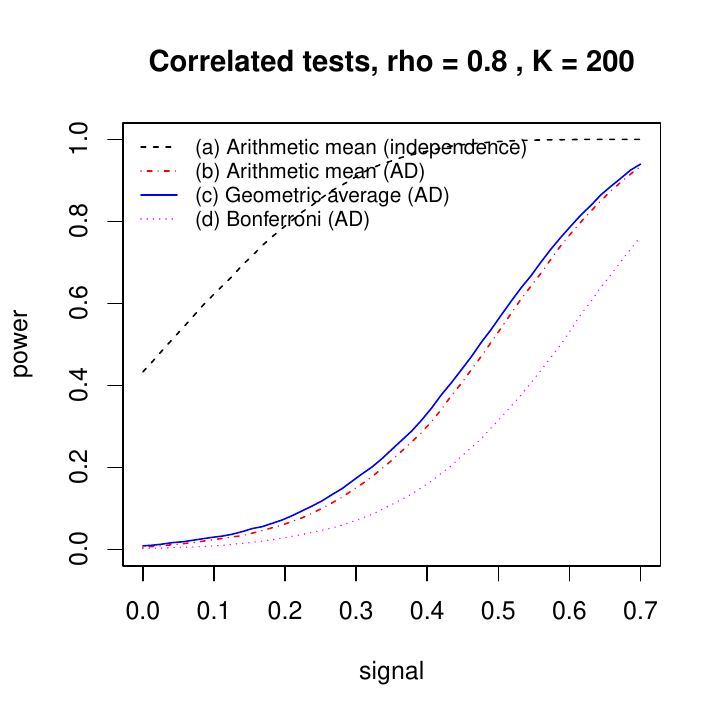}} \\  
\caption{Tests based on combining p-values or mid p-values}
\label{fig:r1-1}
\end{center}
\end{figure}

As we can see from Figure \ref{fig:r1-1}, 
the test (a) relying on independence has the strongest power as expected.
Its size becomes largely inflated as soon as mild dependence is present, and hence 
it can only  be applied in situations where independence among obtained mid p-values can be justified.
Indeed,   the size of test (a) can be $\approx 0.4$ in case $\rho=0.2$, which is clearly not useful.
Among the three methods that are valid for arbitrary dependence,
the geometric average (c) has stronger power for large $\rho$ 
and the Bonferroni correction (d) has stronger power for small $\rho$. 
The arithmetic average   (b) performs  poorly unless p-values are very strongly correlated.  
These observations on merging mid p-values are consistent with those in \cite{VW20} on merging p-values.   
In conclusion, the geometric average (c) can be useful when  the dependence among mid p-values is speculated to be strong or complicated, although unknown to the decision maker.

\section{Conclusion}

In this paper we introduced   p*-values (p*-variables) as an abstract measure-theoretic object.
The notion of p*-values is a manifold generalization of p-values, and it enjoys many attractive theoretical properties in contrast to p-values. 
In particular, mid p-values, which arise in the presence of discrete test statistics, form an important subset of p*-values. 
Merging methods for p*-values are studied. In particular, a weighted geometric average of arbitrarily dependent p*-values multiplied by $\mathrm e\approx 2.718$ yields a valid p-value, which can be useful when multiple mid p-values are possibly strongly correlated. 

Results on calibration between p*-values and e-values reveal that p*-values serve as an intermediate step in both the standard  e-to-p  and p-to-e calibrations.
Although  a  direct test with p*-values may involve randomization, we find that p*-values are useful in the design of deterministic tests with averages of p-values, mid p-values, and e-values. 
From the results in this paper, the concept of p*-values serves   a useful technical tool which enhances the extensive and growing applications of p-values, mid p-values, and e-values. 


\begin{acks}[Acknowledgments]
The author thanks 
Ilmun Kim,  Aaditya Ramdas,  and Vladimir Vovk   for constructive comments on an earlier version of the paper,
and Tiantian Mao, Marcel Nutz, and Qinyu Wu for kind help on  some technical statements. 
\end{acks}

\begin{funding}
The author acknowledges financial support from   and the Natural Sciences and Engineering Research Council of Canada (RGPIN-2018-03823, RGPAS-2018-522590).
\end{funding}

\appendix
\section{Proofs of all results}
\label{app:proofs}
In this appendix, we collect proofs of all theorems and propositions in the main paper.

\subsection{Proofs of results in Section \ref{sec:r1-2}}


\begin{proof}[Proof of Theorem \ref{th:mid-p}]
\begin{enumerate}[(i)]
\item For $V$ being a strictly increasing function of $U$, we have $\E[U|V]=U$, 
and the equivalence statement follows directly from the definition of p-variables. 
\item We first show the ``if" statement. Write $V=f(U)$ for an increasing function $f$. Denote by $F$ and $G$  the distribution function and the  left-quantile function of $V$, respectively, and let $Z$ be a standard uniform random variable independent of $V$.
Moreover, let 
$$
U_V = F(V-)+Z(F(V)-F(V-)),
$$
which is uniformly distributed on $[0,1]$ and satisfies $G(U_V)=V$ a.s.~(e.g., \cite[Proposition 1.3]{R13}).
Since $G(U_V)=V=f(U)$ a.s., and both $G$ and $f$ are increasing,  
we know that the functions $G$ and $f$ differ on a set of Lebesgue measure $0$. 
 Therefore, $\E[U|V] = \E[U  | G(U)]$, which is identically distributed as 
 $\E[U_V|G(U_V)]$. Moreover, 
\begin{equation}
 \E[U_V|G(U_V)] = \E[U_V|V] = \frac 12 F(V-)+ \frac 12 F(V). \label{eq:r1-mid-p}
\end{equation}
Therefore, $P\ge_1 \E[U|V]$ implies    $P\ge_1  (F(V-)+    F(V) )/2 $, and thus $P$ is a mid p-variable.

The ``only if" statement follows from  $P\ge_1 (F(T-)+F(T))/2$ and 
  \eqref{eq:r1-mid-p} by choosing $U=U_V$ and $V=T$.

\item  We first show the ``if" statement.
Note that $P\ge \E[U|V] \ge_2 U$ 
where the second inequality is guaranteed by Jensen's inequality. 
Since $\ge $ is stronger than $\ge_2$ and   $\ge_2$  is  transitive, 
we get $P\ge_2 U$ and hence $P$ is a p*-variable.

Next, we show the ``only if" statement. By using  \cite[Theorems 4.A.5]{SS07},
the definition of a p*-variable $P$ implies that there exist a standard uniform random variable $U$  and a random variable $V$  such that 
$ P \ge_1 V \ge \E[U|V]$. \qedhere
\end{enumerate}
\end{proof}

\begin{proof}[Proof of Proposition \ref{prop:convex}]
By Theorem \ref{th:convex},  the set of p*-variables is the convex hull of the set of p-variables, and thus convex. 
This also implies that none of the set of p-variables or that of mid p-variables is convex.
 
To show that the set of p*-variables is closed under distribution mixtures, it suffices to note that the stochastic orders  $\le_1$ and $\le_{2}$ (indeed, any order induced by inequalities via integrals) is closed under distribution mixture.

To see that the set of mid p-variables is not closed under distribution mixtures,
we note from \eqref{eq:mid} that 
any mid p-variable with mean $1/2$ and a point-mass at $1/2$
must not have any density in a neighbourhood of $1/2$.
Hence, the mixture of uniform distribution on $[0,1]$ and a point-mass at $1/2$ is not the distribution of a mid p-variable.

Closure under convergence for $\le_1$ is justified  by Theorem 1.A.3 of \cite{SS07},
and closure under convergence for $\le_2$ is justified by Theorem 1.5.9 of \cite{MS02}.  
 Closure under convergence for the set of mid p-values follows by noting that the set of distributions of $P_T$ in \eqref{eq:mid} is closed under convergence in distribution, which can be checked by definition.
\end{proof}

\subsection{Proofs of results in Section \ref{sec:2}}

\begin{proof}[Proof of Theorem \ref{th:convex}]
We first show that a convex combination of p-variables is a p*-variable. 
Let $U$ 
be a uniform random variable on $[0,1]$, $P_1,\dots,P_K$ be $K$ p-variables, $(\lambda_1,\dots,\lambda_K)$ be an element of the standard $K$-simplex, and $f$ be an increasing concave function.
 By monotonicity and concavity of $f$, we have
$$
\E\left[f\left( \sum_{k=1}^K \lambda_k  P_k \right) \right]
\ge \E \left[  \sum_{k=1}^K \lambda_k  f(P_k)\right] \ge \E \left[ \sum_{k=1}^K \lambda_k f(U)\right] = \E[f(U)].
$$
Therefore,  $\sum_{k=1}^K \lambda _k P_k\ge_2 U$ and thus  $ \sum_{k=1}^K \lambda _k P_k $ is a p*-variable. 

Next, we show the second statement that any p*-variable can be written as the average of three p-variables, which also justifies the ``only if" direction of the first statement.  

Let $P$ be a p*-variable satisfying  $\E[P]=1/2$. 
Note that $P\ge_2 U$ and $\E[P]=\E[U]$ together implies $P\ge_{\rm cv} U$ (see e.g., \cite[Theorem 4.A.35]{SS07}), where $\le_{\rm cv}$ is the concave order, meaning that $\E[f(P)]\ge \E[f(U)]$ for all concave $f$. Theorem 5 of \cite{MWW19} says that any $P\ge_{\rm cv} U$, there exist  three standard uniform random variables $P_1,P_2,P_3$ such that 
$3P=P_1+P_2+P_3$ (this statement is highly non-trivial). This implies that $P$ can be written as the arithmetic average of three p-variables $P_1,P_2,P_3$.

Finally, assume that the p*-variable $P$ satisfies $\E[P]>1/2$. 
In this case, using Strassen's Theorem in the form of  \cite[Theorems 4.A.5 and 4.A.6]{SS07},
there exists a random variable $Z$ such that 
$U\le_{\rm cv} Z\le P$. 
As we explained above,  there exist p-variables $P_1,P_2,P_3$ such that 
$3Z=P_1+P_2+P_3$.
For  $i=1,2,3$, let $\tilde P_i:=P_i+(P-Z) \ge_1 P_i$. Note that $\tilde P_1,\tilde P_2,\tilde P_3$ are p-variables and $3P= \tilde P_1+ \tilde P_2+ \tilde P_3$.
Hence, $P$ can be written as the arithmetic average of three p-variables. 
\end{proof}

\begin{proof}[Proof of Proposition \ref{prop:q}]
For the ``only-if" statement in (i), 
since $P$ is a p-variable, we know that its distribution $F$ satisfies $F(t)\le t$ for $t\in (0,1)$. Therefore, by setting $T=P$,
$$P \ge F(P) = F(T) = \p(T' \le T | T) = \p(T' \le T|X),$$
where the last equality holds since $T'$ is independent of $X$. 
To check the ``if" direction of (i),  we have $\p(T'\le T |X ) =\p(T'\le T|T) = F(T)$ where $F$ is the distribution of $T$.  Note that $F(T)$ is stochastically larger than or equal to  a uniform random variable on $[0,1]$, and hence $\p(F(T)\le t) \le t$.   

Next, we show (ii).
First, suppose that  $P \ge \p(T'\le T  |X) $.
Let $U$ be a uniform random variable on $[0,1]$. 
By Jensen's inequality, 
we have $\E[F(T)|X] \ge_2  F(T)$.
Hence, 
$P\ge_2 \p(T'\le T  |X) = \E[F(T)|X] \ge_2  F(T)  \ge_2 U$, and thus $P$ is a p*-variable.

For the converse direction,  suppose that $P$ is a p*-variable.  
By Strassen's Theorem in the form of  \cite[Theorem 3.A.4]{SS07}, 
there exists a uniform random variable $U'$ on $[0,1]$ and a random variable $P'$ identically distributed as $ P$ 
such that 
$P'\ge \E[U'|P']$. 
Let $G(\cdot|p)$ be the left-quantile function of a regular conditional distribution of $U'$ given $P'=p \in [0,1]$. 
Further, let $V$ be a uniform random variable on $[0,1]$ independent of  $(P,X)$, and $U:=G(V|P)$.
It is clear that $(U,P)$ has the same law as $(U',P')$.
Therefore, $P\ge \E[U|P]$. Moreover, $\E[U|X] = \E[G(V|P)|X] = \E[G(V|P)|P]$ since $V$ is independent of $X$.
Hence, $P\ge \E[U|P]=\E[U|X]$.  
Let $V'$ be another uniform random variable  on $[0,1]$ independent of $(U,X)$. 
We have
$$
\p(V'\le U|X) = \E\left [\id_{\{V'\le U\}} |X \right] = \E[U|X].
$$ 
Hence, the representation $P\ge \p(T'\le T|X)$ holds with $T'=V'$ and $T=U$. 

Finally, we show the last statement on replacing
$
\p(T'\le T|X) 
$  by    $\p(T'\le T|X)/2+\p(T'<T|X)/2$.
The ``only-if" direction follows from  the  argument for (ii) by noting that $U$ constructed above has a continuous distribution. 
The ``if" direction follows from 
\begin{align*}
\frac12 (\p(T'\le T|X)+\p(T'<T|X))  & =\frac12 (\E[F(T)|X] + \E[F(T-)|X]) \\& \ge_2  \frac12  (F(T)+ F(T-)) \ge_2 U,
\end{align*}
 where the second-last inequality is Jensen's, and the last inequality is  implied by Theorem \ref{th:mid-p}. 
\end{proof}

\begin{proof}[Proof of Proposition \ref{th:newdef}]
The ``if" statement is implied by Proposition \ref{th:1}.
To show the ``only if" statement, denote by $F_P$ the distribution function of $P$ and $F_U$ be the distribution function of a uniform random variable $U$ on $[0,1]$.
We have
$$
\alpha \ge \p(P\le V_\alpha) = \int_0^{2\alpha} \frac{F_P(u) }{2\alpha }\d u.
$$
Therefore, for $v\in (0,1]$, we have 
$$
 \int_0^{v}  {F_P(u) } \d u \le \frac{v^2}{2 } = \int_0^v u \d u =\int_0^v F_U(u)\d u.
$$
By Theorem 4.A.2 of \cite{SS07}, the above inequality implies $U\le_2 P$. 
Hence, $P$ is a p*-variable.
\end{proof} 

\subsection{Proofs of results in Section \ref{sec:merg}}

\begin{proof}[Proof of Proposition \ref{prop:trivial}]
Let $U$ be a uniform random variable on $[0,1]$.  
The first statement is trivial by definition. For the second statement, let $P_1,P_2$ be two p*-variables.   
For any $\epsilon \in (0,1)$,  
\begin{align*}
\p( P_1+P_2\le  \epsilon) &=  \E[\id_{\{P_1+P_2\le \epsilon\}}] \\&\le \E\left[\frac1{\epsilon}{(2\epsilon-P_1-P_2)_+} \right] 
 \\ &\le \E\left[\frac1{\epsilon} {(\epsilon-P_1)_+} \right] +\E\left[\frac1{\epsilon} {(\epsilon-P_2)_+}\right] 
 \\&\le 2\E\left[\frac1{\epsilon} {(\epsilon-U)_+} \right]    =  \epsilon,
\end{align*}
where the last inequality is because $U\le_{\rm 2} P_1,P_2$ and $u\mapsto (\epsilon-u)_+$ is  convex and decreasing.
Therefore, $P_1+P_2$ is a p-variable.
\end{proof}

The following lemma is needed in the proof of Theorem \ref{th:geom}.
\begin{lemma}
\label{lem:quantile}
Let $M$ be an increasing Borel function  on $[0,\infty)^K$.
Then
$M(P_1,\dots,P_K)$ is a p-variable for all p*-variables $P_1,\dots,P_K$ if and only if 
\begin{equation}\label{eq:quantile-eq}
\inf\{q_1( M(\alpha P_1,\dots,\alpha P_K)) :P_1,\dots,P_K\ge_2 U\}\ge \alpha ~\mbox{for all $\alpha \in (0,1)$,}
\end{equation}
where $U$ is uniform on $[0,1]$ and $q_1(X)$ is the essential supremum of a random variable $X$.
\end{lemma}

\begin{proof}
Let $g(\alpha)$ be the critical value for testing with $M(P_1,\dots,P_K)$,
that is, the largest value such that 
$$
\p\left( M(P_1,\dots,P_K) < g(\alpha) \right) \le \alpha \mbox{~~~for all p*-variables $P_1,\dots,P_K$}.
$$ 
Converting between the distribution function and the quantile function,
this means
$$
g(\alpha) =\inf\{q_\alpha( M(  P_1,\dots,  P_K)) :P_1,\dots,P_K\ge_2 U\},
$$
where $q_\alpha (X)$ be the left $\alpha$-quantile of a random variable $X$.
For an increasing function $M$,  its infimum $\alpha$-quantile can be converted to the essential supremum of random variables with conditional distributions on their lower $\alpha$-tail; see the proof of \cite[Theorem 3]{LW21} which deals with the case of additive functions (see also the proof of \cite[Proposition 1]{CLTW22} where this technique is used). 
Note that for $P\ge 2 U$, its lower $\alpha$-tail conditional distribution dominates $\alpha U$.
This argument leads to
$$
g(\alpha) = \inf\{q_1( M(  P_1,\dots,  P_K)) :P_1,\dots,P_K\ge_2 \alpha U\}.
$$
Noting that $P\ge_2 \alpha U$ is equivalent to $ P /\alpha\ge_2 U$, we obtain \eqref{eq:quantile-eq}.
\end{proof}

\begin{proof}[Proof of Theorem \ref{th:geom}] 
First, suppose that $w_1,\dots,w_K$ are non-negative constants adding up to $1$.
Let $g(\alpha)$ be the critical value for testing with $\tilde P$,
that is, the smallest value such that 
$$
\p\left(\prod_{k=1}^K P_k^{w_k}\le g(\alpha) \right) \le \alpha \mbox{~~~for all p*-variables $P_1,\dots,P_K$}.
$$  
We will show that $g(\alpha)\ge \alpha/\epsilon$.
Using Lemma \ref{lem:quantile}, we get
$$
g(\alpha)  
= \alpha \inf \left\{ q_1\left(\prod_{k=1}^K P_k^{w_k}\right):P_1,\dots,P_K\ge_2   U\right\}.
$$ 
For any p*-variables $P_1,\dots,P_K$, since $\log(\cdot)$ is an increasing concave function, we have 
\begin{align*}
 q_1\left(\sum_{k=1}^K w_k \log(P_k)\right) 
&\ge\E \left[\sum_{k=1}^K w_k \log(P_k)\right]
\ge \sum_{k=1}^K w_k \E \left[ \log(U)\right]=-1.
\end{align*} 
Therefore, $\log  ({g(\alpha)}/{\alpha })  \ge -1$, leading to the desired bound $g(\alpha)\ge \alpha /\mathrm {e}$,  and thus $$
\p \left (\prod_{k=1}^K P_k^{w_k}\le \alpha/ \mathrm e  \right) \le
\p \left (\prod_{k=1}^K P_k^{w_k}\le g(\alpha)  \right) \le
 \alpha.$$
For random $w_1,\dots,w_k$, taking an expectation leads to \eqref{eq:geom}.
\end{proof}

\begin{proof}[Proof of Proposition \ref{prop:p*-merg}]
The validity of $M_K$ as  a p*-merging function is implied by Theorem \ref{th:convex}. 
To show its admissibility, suppose that there exists a p*-merging function $M$ that strictly dominates $M_K$.
Let $P_1,\dots,P_K$ be iid uniform random variables on $[0,1]$.
The strict domination implies 
$M\le M_K$ and 
$\p(M(P_1,\dots,P_K)<M_K(P_1,\dots,P_K))>0$. 
We have
$$\E[ M(P_1,\dots,P_K) ] < \E [M_K(P_1,\dots,P_K)] = \frac 12 .
$$
This means that $M(P_1,\dots,P_K)$ is not a p*-variable, a contradiction. 
\end{proof}

\begin{proof}[Proof of Proposition \ref{prop:p*-merg2}]
Let $P_1,\dots,P_K$ be p*-variables, and $P$ 
be a random variable such that the distribution of $P$ is the equally weighted mixture of those of $P_1,\dots,P_K$. Note that 
  $P$  a p*-variable by Proposition \ref{prop:convex}. 
Let $P_{(1)} = \bigwedge_{k=1}^K P_k$. 
Using the Bonferroni inequality, we have, for any $\epsilon \in (0,1)$, 
\begin{equation}\label{eq:Bon}
\p(P_{(1)} \le \epsilon) \le \sum_{k=1}^K \p(P_k\le \epsilon) = K \p(P\le \epsilon).
\end{equation}
Let $G_1$ be the left-quantile function of $P_{(1)}$ and $G_2$ be that of $P$. 
By \eqref{eq:Bon}, we have
$G_1 (K t) \ge G_2(  t)$ for all $t\in (0,1/K)$. 
Hence, for each $y\in (0,1/K)$, using the equivalent condition  \eqref{eq:quantile},  we have 
$$
 \int_0^y K G_1 (t) \d t \ge  \int_0^y K G_2 (  t/K) \d t =K^2  \int_0^{y/K} G_2(t) \d t \ge K^2 \frac{y^2}{2 K^2 } = \frac{y^2} 2. 
$$
This implies, via the equivalent condition \eqref{eq:quantile} again, that $K P_{(1)}$ is a p*-variable.

Next we show the admissibility of $M_B$ for $K\ge 2$, since the case $K=1$ is trivial. Suppose that there is a p*-merging function $M$ which strictly dominates $M_B$.
Since $M$ is increasing,  there exists $p\in (0,1/K]$ such that $q:=M(p,\dots,p)< M_B(p,\dots,p) = Kp$. 
First, assume $2 Kp \le 1$.  
Define identically distributed random variables $P_1,\dots,P_K$ by
$$
P_k= p \id_{A_k}  + \id_{A_k^c},~~~k=1,\dots,K,
$$
where $A_1,\dots,A_K$ are disjoint events with $\p(A_k)=2p$ for each $k$.
It is easy to check that $P_1,\dots,P_K$ are p*-variables,
and 
$$ \p(M(P_1,\dots,P_K)=q)=  \p\left(\bigcup_{k=1}^K A_k \right) = \sum_{k=1}^K \p(A_k)   = 2Kp.$$ 
Thus, $M(P_1,\dots,P_K) $ takes the value $ q  <Kp $  with probability $2Kp$, and it takes the value $1$ otherwise. 
Let $G$ be the left-quantile function of $M(P_1,\dots,P_K)$.
The above calculation leads to
$$\int_0^{2Kp}  G(t) \d t = 
2q Kp < \frac{(2Kp)^2} 2, 
$$
showing that $M(P_1,\dots,P_K)$ is not a p*-variable by \eqref{eq:quantile}, a contradiction. 

Next, assume $2Kp>1$. In this case, let $r= p-1/(2K)$, and define identically distributed random variables $P_1,\dots,P_K$ by
$$
P_k=  r  \id_{B_k} + p \id_{A_k}  + \id_{(A_k\cup B_k)^c},~~~k=1,\dots,K,
$$
where $A_1,\dots,A_K, B_1,\dots,B_K$ are disjoint events with $\p(A_k)=1/K-2r$ and $\p(B_k)=2r$, $k=1,\dots,K$. 
Note that the union of $A_1,\dots,A_K, B_1,\dots,B_K$  has probability $1$. 
It is easy to verify that  $P_1,\dots,P_K$ are p*-variables. 
Moreover, we have  
$q':=M(r,\dots,r) \le Kr$ since $M$ dominates $M_B$.
Hence,
$M(P_1,\dots,P_K) $ takes the value $ q' \le q  $  with probability $2Kr$, and it takes the value $q$ otherwise. Let $G$ be the left-quantile function of $M(P_1,\dots,P_K)$. Using $q'\le Kr$ and $q<Kp = Kr+1/2$, we obtain
\begin{align*} \int_0^{1}  G(t) \d t & = 
\int_0^{2Kr} q' \d t + \int_{2Kr}^1 q \d t  \\ &\le 2(Kr)^2 + q (1-2Kr) 
\\ &< 2(Kr)^2 +  \left(Kr+\frac12 \right) (1-2Kr)  = \frac 12,
\end{align*}
showing that $M(P_1,\dots,P_K)$ is not a p*-variable by \eqref{eq:quantile}, a contradiction.
As $M$ cannot strictly dominate $M_B$, we know that $M_B$ is admissible. 
\end{proof}

\subsection{Proofs of results in Section \ref{sec:5}}

\begin{proof}[Proof of Theorem \ref{prop:calibrator}] Let $U$ be a uniform random variable on $[0,1]$. 
\begin{enumerate}[(i)] 

\item 
The validity of the calibrator $u\mapsto (2u)\wedge 1$  is implied by (i), and below we show that it dominates all others.
For any function $g$ on $[0,\infty)$, suppose that $g(u) < 2u$ for some $u\in (0,1/2]$. 
Consider the random variable $V$ defined by $V=U1_{\{ U>2u\}} + u   1_{\{U\le 2u\}}$. Clearly, $V$ is a p*-variable. Note that
$$\p(g(V)\le g(u)) \ge \p(U\le 2u) =2u >g(u),$$  implying that $g$ is not a p*-to-p calibrator. Hence, any p*-to-p calibrator $g$ satisfies $g(u)\ge 2u$ for all $u\in (0,1/2]$, thus showing that $u\mapsto (2u)\wedge 1$ dominates all  p*-to-p calibrators. 
\item By \eqref{eq:quantile}, we know that $f(U)\ge_2 U$,  and thus $f$ is a valid p-to-p* calibrator. To show its admissibility, it suffices to notice that
$f$ is left-continuous (lower semi-continuous) function  on $[0,1]$,
and if $g\le f$ and $g\ne f$, then $\int_0^1 g(u) \d u<1/2$, implying that $g(U)$ cannot be a p*-variable.  \qedhere
\end{enumerate}
\end{proof}

\begin{proof}[Proof of Theorem \ref{prop:pstar-e}]
\begin{enumerate}[(i)]
\item Let $f$ be a convex  p-to-e calibrator.
Note that $-f$ is increasing and concave. 
For any $[0,1]$-valued p*-variable $P$, by definition, we have  
$\E[-f(P)] \ge \E[-f(U)].$ Hence, 
$$
\E[f(P)] = - \E[-f(P)] \le -\E[-f(U)] =\E[f(U)] \le1.
$$
Since a $[0,\infty)$-valued p*-variable is first-order stochastically larger than some $[0,1]$-valued p*-variable (e.g., \cite[Theorem 4.A.6]{SS07}),
we know  that $\E[f(P)] \le 1$ for all p*-variables $P$.
Thus, $f$ is a p*-to-e calibrator.

Next, we show the statement on admissibility. A convex admissible p-to-e calibrator $f$ is a p*-to-e calibrator.  Since the class of p-to-e calibrators is larger than the class of p*-to-e calibrators,  $f$ is not strictly dominated by any p*-to-e calibrator. 
\item 
We only need to show the ``only if" direction, since the ``if" direction is implied by (i).  
Suppose  that a non-convex function $f$ is an admissible p-to-e calibrator.
Since $f$ is not convex, there exist two points $t,s\in [0,1]$ such that 
$$f(t)+f(s) < 2 f\left(\frac{t+s}2\right).$$
Left-continuity of $f$ implies that there exists $\epsilon\in (0,|t-s|)$ such that 
$$f(t-\epsilon)+f(s-\epsilon) < 2 f\left(\frac{t+s}2\right).$$
Note that 
$$ \int_{t-\epsilon}^t   \left( f\left(\frac{t+s}2\right)  -f(u) \right)\d u \ge 
\epsilon\left( f\left(\frac{t+s}2\right) - f(t-\epsilon)\right),
$$
and the inequality also holds if the positions of $s$ and $t$ are flipped. 
Hence, by letting 
$A=  [t-\epsilon,t]\cup [s-\epsilon, t]$, we have 
\begin{align}  \notag 
& \int_A  \left( f\left(\frac{t+s}2\right)  -f(u) \right)\d u  
\\& \ge \epsilon\left(2 f\left(\frac{t+s}2\right)  -f(t-\epsilon)   -f(s-\epsilon) \right)  >0. \label{eq:convex}
\end{align} 
Let $U$ be a uniform random variable on $[0,1]$ and $P$ be given by
$$
P=U \id_{\{U\not \in A\}} + \frac{t+s}2 \id_{\{U\in A\}}.
$$
For any increasing concave function $g$ and   $x\in [t-\epsilon,t]$ and $y\in [s-\epsilon,s]$, we have 
$$2 g\left(\frac{t+s}2\right) \ge g(t) +g(s) \ge g(x) + g(y).$$
Therefore, $\E[g(P)] \ge \E[ g(U)]$, and hence $U\le_2 P$.
Thus, $P$ is a p*-variable.
Moreover, using \eqref{eq:convex}, we have 
$$
\E[f(P)]= \int_0^1 f(u) \d u + \int_A  \left( f\left(\frac{t+s}2\right)  -f(u) \right)\d u >\int_0^1 f(u)\d u=1.
$$
Hence, $f$ is not a p*-to-e calibrator. 
Thus, $f$ has to be convex if it is both an admissible p-to-e calibrator and a p*-to-e calibrator. 

\item
First, we show that $f:e\mapsto  (2e)^{-1}\wedge 1$ is an e-to-p* calibrator. 
Clearly, it suffices to show that $1/(2E)$ is a p*-variable for any e-variable $E$ with mean $1$, since any e-variable with mean less than $1$ is dominated by an e-variable with mean $1$. 
Let $\delta_x$ be the point-mass at $x$. 

Assume that $E$ has a two-point distribution (including the point-mass $\delta_1$ as a special case). With $\E[E]=1$, the distribution $F_E$ of $E$ can be characterized with two parameters $p\in (0,1) $ and $a\in (0,1/p]$ via
$$
F_E= p \delta_{1+(1-p)a}  + (1-p)  \delta_{1-pa}.
$$ 
The distribution  $F_P$ of $P:=1/(2E)$ (we allow $P$ to take the value $\infty$ in case $a=1/p$) is given by
$$F_P= p \delta_{1/(2+2(1-p)a)}  + (1-p)  \delta_{1/(2-2pa)}.$$
Let $G_P$ be the left-quantile function of $P$ on $(0,1)$.  
We have 
$$G_P(t) =  \frac{1}{2+2(1-p)a} \id_{\{t\in (0,p]\}} +  \frac{1}{2+2pa} \id_{\{t\in(p,1)\}}. 
$$
Define two functions $g$ and $h$ on $[0,1]$ by $g(v):=\int_0^v G_P(u) \d u  $ and $h(v):=v^2/2$. 
For $v\in (0, p]$, we have, using $a\le 1/p$,
\begin{align*}
g(v) &= \int_0^v G_P(u)\d u \\ &=  \frac{v}{2+2(1-p)a} \ge \frac{v}{2+2(1-p)/p} = \frac{vp}{2} \ge\frac{v^2}2 = h(v).
\end{align*}
Moreover, Jensen's inequality gives
$$g(1)=\int_0^1 G_P(u)\d u =\E[P]=\E\left[\frac{1}{2E}\right] \ge \frac 1{2\E[E]} =\frac 12 = h(1).$$
Since $g$ is linear on $[p,1]$, and $v$ is convex,   $g(p)\le h(p)$ and $g(1)\le h(1)$ imply $g(v)\le h(v)$ for all $v\in [p,1]$. 
Therefore, we conclude that $g\le h$ on $[0,1]$, namely
$$
\int_0^v G_P(u)\d u  \le \frac{v^2}2 ~~\mbox{for all $v\in [0,1]$}.
$$
Using \eqref{eq:quantile}, we have that $P$ is a p*-variable. 

For a general e-variable $E$ with mean $1$, its distribution can be rewritten as a mixture of two-point distributions with mean $1$ 
(see e.g., the construction in Lemma 2.1 of \cite{WW15}).
Since the set of p*-variables is closed under distribution mixtures (Proposition \ref{prop:convex}),
we know that $f(E)$ is a p*-variable. Hence, $f$ is an e-to-p* calibrator.

To show that $f$ essentially dominates all other e-to-p* calibrators, we take any e-to-p* calibrator $f'$. 
Using Theorem \ref{prop:calibrator}, the function
$e\mapsto (2 f' (e)) \wedge 1$ is an e-to-p calibrator. Using Proposition 2.2 of \cite{VW21}, 
any e-to-p calibrator is dominated by $e\mapsto e^{-1} \wedge 1$, and hence
$$
(2 f' (e)) \wedge 1 \ge e^{-1} \wedge 1~~~~\mbox{for $e\in [0,\infty)$},
$$
which in term gives $f'(e)\ge  (2e)^{-1}$ for $e\ge 1$.
Since $f'$ is decreasing, we know that $f'(e)<1/2$ implies $e>1$.
For any $e\ge 0$ with $f'(e)<1/2$, we have $f'(e)\ge  f(e)$, and thus $f$ essentially dominates $f'$. \qedhere  
\end{enumerate}
\end{proof}

\begin{proof}[Proof of Proposition \ref{prop:1over2e}]
The first statement is implied by Theorem \ref{prop:pstar-e} (iii)  .
For the second statement, we note that $1/2$ is a p*-variable.  For $\alpha \in (0,1)$, applying \eqref{eq:randomav} with $P=1/2$ and $V= \alpha  E $, we obtained a p*-test with a random threshold with mean at most $\alpha$. Using  Proposition \ref{prop:storder}, this test has size at most $\alpha$, that is,
$\p(2E \ge 1/\alpha)\le \alpha$.  Hence, $P$ is a p-variable.
\end{proof} 

  \section{Randomized p*-test and applications}
  \label{sec:testing}
  In this appendix we discuss several applications of testing with p*-values and randomization.
   \subsection{Randomized p*-test}

We first introduce the a generalized version of  the randomized p*-test  in Proposition \ref{th:newdef}. 
The following density condition (DP)  for a $(0,1)$-valued random variable $V$ will be useful.
  \begin{enumerate}[(DP)]  
  \item $V$ has a decreasing  density function on $(0,1)$.
  \end{enumerate}
 
The canonical choice of $V$ satisfying (DP) is a uniform random variable on $[0,2\alpha]$ for $\alpha \in (0,1/2]$, which we will explain later. 
For a $(0,1)$-valued random variable $V$ with mean $\alpha$ and a p*-variable $P$ independent of $V$, we consider the test
  \begin{align}\label{eq:randomav}
 \mbox{rejecting the null hypothesis} ~ \Longleftrightarrow~ P\le V.
   \end{align}    
   The following theorem justifies the validity of  the test \eqref{eq:randomav} with the necessary and sufficient condition (DP).

   \begin{proposition}\label{th:1}
Suppose that $V$ is a $(0,1)$-valued random variable with mean $\alpha$. 
   \begin{enumerate}[(i)]
   \item The test \eqref{eq:randomav} has size at most $\alpha$ for all p*-variables $P$ independent of $V$ if and only if $V$ satisfies (DP). 
   \item The test \eqref{eq:randomav} has size at most $\alpha$  for all p-variables $P$ independent of $V$, and the size is precisely $\alpha$ if $P$ is uniformly distributed on $[0,1]$.
   \end{enumerate}
 \end{proposition} 
  
  The proof of Proposition \ref{th:1} relies on the following lemma. 
  \begin{lemma}\label{lem:1}
For any non-negative random variable $V$ with a decreasing density function on $(0,\infty)$ (with possibly a probability mass at $0$) and any p*-variable $P$   independent of $V$, we have $\p(  P\le V) \le \E[V]$.     \end{lemma}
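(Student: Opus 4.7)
The plan is to use Khintchine's mixture representation: since $V$ has a decreasing density on $(0,\infty)$ (possibly with an atom at $0$), one has $V\stackrel{d}{=} UW$ with $U\sim \mathrm{U}[0,1]$ independent of some non-negative $W$. As the assertion depends only on the joint law of $(P,V)$ and $P\perp V$, I will work on an enlarged probability space where $V=UW$ and $(U,W)\perp P$, so that $\E[V]=\E[U]\E[W]=\E[W]/2$.

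First, I would record the auxiliary fact that any p*-variable has no atom at zero: testing $U\le_2 P$ against $\phi_c(p)=\min(p,c)$ gives $c-c^2/2=\E[\phi_c(U)]\le \E[\phi_c(P)]\le c(1-\p(P=0))$, whence $\p(P=0)\le c/2\to 0$. Next, conditioning on $(P,W)$ and using $U\perp(P,W)$, I observe that on $\{W>0\}$ the event $\{P\le UW\}$ equals $\{U\ge P/W\}$, of conditional probability $(1-P/W)^+$, while the contribution from $\{W=0\}$ is $\p(P=0)\p(W=0)=0$. This yields the key identity
\[ \p(P\le V)=\E\!\left[\mathbf{1}_{\{W>0\}}(1-P/W)^+\right]. \]

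The crux is then a pointwise-in-$w$ estimate. For fixed $w>0$ the function $g_w(p):=(1-p/w)^+$ is decreasing and convex (piecewise linear with its single kink at $p=w$), so $-g_w$ is increasing and concave, and $U\le_2 P$ gives $\E[g_w(P)]\le \E[g_w(U)]$. A direct computation shows $\E[g_w(U)]=w/2$ for $w\le 1$ and $1-1/(2w)$ for $w>1$; the latter is bounded by $w/2$ since $(w-1)^2\ge 0$. Substituting and integrating over $W$,
\[ \p(P\le V)\le \E[\mathbf{1}_{\{W>0\}}\,W/2]=\E[W]/2=\E[V], \]
which is the desired conclusion.

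The hardest step will be the very first one: invoking Khintchine's representation (or deriving it from scratch by writing the concave CDF of $V$ as a mixture of the uniform CDFs $v\mapsto \min(v/w,1)$) and arranging that $(U,W)$ be realized independently of $P$. Everything after that reduces to routine manipulation of second-order stochastic dominance.
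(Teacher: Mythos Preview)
Your argument is correct but takes a genuinely different route from the paper's. The paper observes directly that a decreasing density forces the distribution function $F_V$ to be concave on $[0,\infty)$; hence $p\mapsto 1-F_V(p)$ is convex and decreasing, and conditioning on $P$ together with $U\le_2 P$ gives in one stroke
\[
\p(P\le V)=\E[1-F_V(P)]\le \int_0^1(1-F_V(u))\,\d u\le \E[V].
\]
Your Khintchine decomposition $V\stackrel{d}{=}UW$ is essentially this same idea applied slice by slice: writing $F_V$ as a mixture of uniform CDFs $\min(\cdot/w,1)$, your function $g_w(p)=(1-p/w)^+$ is exactly $1-\min(p/w,1)$, i.e.\ the paper's $1-F_V$ for the component $\mathrm U[0,w]$, and you then apply the convex--decreasing bound to each $w$ before reintegrating. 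What your approach buys is an explicit computation and a careful handling of the boundary (your verification that $\p(P=0)=0$ is a detail the paper's one-liner glosses over, since the atom of $V$ at $0$ would otherwise make $\p(V\ge 0)\ne 1-F_V(0)$). What the paper's approach buys is brevity: it needs no representation theorem, no enlarged probability space, and no case split on $w\le 1$ versus $w>1$.
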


 \begin{proof}
  Let $F_V$ be the distribution function of $V$, which   is an increasing concave function on $[0,\infty)$ because of the decreasing density.
  Since $P$ is a p*-variable, 
  we have   $\E[F_V( P)] \ge \int_0^1  F_V(u) \d u  $. 
Therefore,  
  \begin{align*}
 \p( P \le V)  &=  \E[\p(P\le V|P)]  
  =\E[1 -    F_V(  P)]   \le \int_0^1(1-F_V(u))\d u =\E[V].
  \end{align*}  
 Hence, the   statement  in the lemma holds. 
  \end{proof}
  \begin{proof}[Proof of Proposition \ref{th:1}]
  We first note that (ii) is straightforward:  for a uniform random variable $U$ on $[0,1]$   independent of $V$, then $\p(U\le V)=\E[V]$.
  If $P \ge_1 U$, then $\p(P\le V)\le\p(U\le V)\le \E[V]$.  
 
  The ``if" statement of point (i) directly follows from Lemma \ref{lem:1},  noting that condition (DP) is stronger than the  condition in Lemma \ref{lem:1}. Below, we show the ``only if" statement of point (i). 
  
Let $F_V$ be the distribution function of $V$ and $U$ be a uniform random variable on $[0,1]$.
Suppose that   $F_V$ is not concave on $(0,1)$. 
It follows that there exists $x,y\in (0,1)$ such that 
$F_V(x)+F_V(y) > 2F_V((x+y)/2)$.
By the right-continuity of $F_V$, there exists $\epsilon\in (0,|x-y|)$ such that
\begin{align}
\label{eq:densitynecessary1}
F_V(x )+F_V(y ) > 2F_V\left(\frac{x+y+\epsilon} 2\right).
\end{align}
 Let $A [x,x+\epsilon] $ and $B = [y,y+\epsilon] $,
which are disjoint intervals.  
Define  a random variable $P$ by 
\begin{align}
\label{eq:densitynecessary}
P= U \id_{\{U\not \in A\cup B\}}   + \frac{x+y+\epsilon}2\id_{\{U\in A\cup B\}}.
\end{align}
We check that $P$ defined by \eqref{eq:densitynecessary} is a p*-variable. For any concave function $g$, Jensen's inequality gives
\begin{align*} \E[g(P)] & =\int_{[0,1]\setminus (A\cup B)}  g(u) \d u  + 2 \epsilon g\left( \frac{x+y+ \epsilon}2\right)
\\& \ge  \int_{[0,1]\setminus (A\cup B)}  g(u) \d u  +  \int_{A\cup B} g(u)\d u = \E[g(U)].
\end{align*}
Hence, $P$ is a p*-variable. It follows from \eqref{eq:densitynecessary1} and \eqref{eq:densitynecessary} that 
\begin{align*}
  \E\left[ F_V\left(   P\right)\right ] 
 & = \int_{[0,1]\setminus (A\cup B)}  F_V(u) \d u + \int_{A\cup B} F_V\left(\frac{x+y+\epsilon}2 \right) \d u
    \\ & < \int_{[0,1]\setminus (A\cup B)}  F_V(u) \d u + \int_{A\cup B} \frac{F_V(x)   + F_V(y) }2   \d u 
    \\ & = \int_{[0,1]\setminus (A\cup B)}  F_V(u) \d u +   \epsilon (F_V(x)   + F_V(y)) 
      \\ & = \int_{[0,1]\setminus (A\cup B)}  F_V(u) \d u +   \int_A F_V(x)  \d u   + \int_B  F_V (y) \d u 
      \\& \le \int_0^1 F_V(u)\d u =1-\E[V]=1-\alpha.
    \end{align*}
    Therefore, 
      \begin{align*}
 \p\left(  P \le V\right)    &=1 -     \E\left[ F_V\left(   P \right)\right ]  
 >1-(1-\alpha)=\alpha.
  \end{align*} 
  Since this contracts the validity requirement, we know that $V$ has to have a concave distribution function, and hence a decreasing density on $(0,1)$.  
\end{proof}

Lemma \ref{lem:1} gives $\p(P\le V)\le \E[V]$ for $V$ possibly taking values larger than $1$ and possibly having a probability mass at $0$.
  We are not interested designing a random threshold with positive probability to be $0$ or larger than $1$, but this result will become  helpful in Section \ref{sec:72}. 
    Since condition (DP) implies $\E[V]\le 1/2$, we will assume $\alpha \in (0,1/2]$, which is certainly harmless for practice.

With the help of  Proposition \ref{th:1}, we formally define $\alpha$-random thresholds and the randomized p*-test. 
\begin{definition}\label{def:V}
 For a significance level $\alpha \in(0,1/2]$,  an \emph{$\alpha$-random threshold} $V$ is a $(0,1)$-valued random variable  independent of the test statistics (a p*-variable $P $ in this section) with mean $\alpha$ satisfying (DP).
For an $\alpha$-random threshold $V$ and a p*-variable $P$, the \emph{randomized p*-test} is given by \eqref{eq:randomav}, i.e.,   rejecting the null hypothesis  $ \Longleftrightarrow  P\le V$.
  \end{definition} 

Proposition \ref{th:1} implies that the randomized p*-test always has size at most $\alpha$,  just like the classic p-test \eqref{eq:p-test}. 
   Since the randomized p*-test \eqref{eq:randomav} has size equal to $\alpha$ if $P$ is uniformly distributed on $[0,1]$,   the size $\alpha$ of the randomized p*-test cannot be   improved in general.

As mentioned in Section \ref{sec:pstar-test}, randomization is generally undesirable in testing. Like any other randomized methods, different scientists may arrive at different statistical conclusions by the randomized p*-test for the same data set generating the  p*-value.
Because of assumption (DP), which is necessary for validity by Proposition \ref{th:1}, we cannot reduce the $\alpha$-random threshold $V$ to a deterministic  $\alpha$. 
  This undesirable feature is the price one has to pay when a p-variable is weakened to a p*-variable.

If one needs to test with a deterministic threshold,
  then  $\alpha/2$ needs to be used instead of $\alpha$. In other words,  the test 
    \begin{align}\label{eq:halfalpha}
 \mbox{rejecting the null hypothesis} ~ \Longleftrightarrow~ P\le \alpha/2
   \end{align}    
   has size $\alpha$ for all p*-variable $P$. The validity of \eqref{eq:halfalpha} was noted by \citet{R82}, and it is a direct consequence of Proposition \ref{prop:trivial}. 
Unlike the random threshold $\mathrm U[0,2\alpha]$ which gives a size precisely $\alpha$ in realistic situations, the deterministic threshold $\alpha/2$ is often overly conservative in practice (see discussions in \cite[Section 5]{M94}), but it  cannot be improved in general when testing with the average of p-variables (\cite[Proposition 3]{VW20}); recall that the average of p-variables is a p*-variable.
  
We will see  an   application of the randomized p*-test  in Section \ref{sec:71},  leading to new tests on the weighted average of p-values,  which can be made deterministic if one of the p-values is independent of the others. 
Moreover,   the randomized p*-test can be used to improve the power of tests with   e-values  and  martingales  in Section \ref{sec:72}.

As  mentioned above, the extra randomness introduced by the random threshold $V$ is often considered undesirable.
One may wish to choose $V$ such that the randomness is minimized. 
The next result shows that $V\sim\mathrm U[0,2\alpha]$ is the optimal choice if the randomness is measured by variance or convex order.
\begin{proposition}\label{prop:con}
For any $\alpha$-random threshold $V$, we have $\var(V) \ge \alpha^2/3$, and this smallest variance is attained by $V^*\sim\mathrm U[0,2\alpha]$.
Moreover, $\E[f(V)] \ge \E[f(V^*)]$ for any convex function $f$ (hence $V\le_2 V^*$ holds). 
\end{proposition}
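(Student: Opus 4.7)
The plan is to use Khintchine's classical representation of random variables with non-increasing densities to realise $V^*$ as a conditional expectation of $V$; the proposition then reduces to conditional Jensen.

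Since $V$ satisfies (D), $V$ has a non-increasing density on $(0,1)$, so by Khintchine's theorem we may write $V\stackrel{d}{=}AW$ with $A$ taking values in $(0,1]$ and $W\sim\mathrm U[0,1]$ independent of $A$. Matching means gives $\E[A]=2\E[V]=2\alpha$, and since $V^*\stackrel{d}{=}2\alpha W$, on a common probability space we obtain the coupling $V=AW$ and $V^*=\E[A]\,W=\E[V\mid W]$.

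For any convex $f$, the map $a\mapsto f(aw)$ is convex for each fixed $w\ge 0$, so conditional Jensen (together with independence of $A$ and $W$) yields $\E[f(V)\mid W]=\E[f(AW)\mid W]\ge f(\E[A]W)=f(V^*)$; taking unconditional expectations gives $\E[f(V)]\ge\E[f(V^*)]$. Applying this with $f(x)=x^2$ and using $\E[V]=\E[V^*]=\alpha$ produces $\var(V)\ge\var(V^*)=(2\alpha)^2/12=\alpha^2/3$. For the claim $V\le_2 V^*$, apply the inequality to $f=-\phi$ for any increasing concave $\phi$ (which is convex) to obtain $\E[\phi(V)]\le\E[\phi(V^*)]$.

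The only non-trivial ingredient is the Khintchine representation with the precise support condition $A\in(0,1]$, which is forced by $V$ taking values in $(0,1)$. If one wishes to avoid invoking the representation theorem, an elementary alternative is a sign-change argument on the difference of densities $h:=g_V-g_{V^*}$: one has $\int_0^1 h=\int_0^1 xh=0$, and monotonicity of $g_V$ together with the shape of $g_{V^*}$ forces $h$ to have sign pattern $+,-,+$ with transitions at some $x^\ast\in[0,2\alpha]$ and at $2\alpha$; subtracting from a convex $f$ the secant $L$ through $(x^\ast,f(x^\ast))$ and $(2\alpha,f(2\alpha))$ makes $(f-L)h\ge 0$ pointwise, and since $\int Lh=0$ we conclude $\int fh\ge 0$.
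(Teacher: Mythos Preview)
Your proof is correct and takes a genuinely different route from the paper. The paper works directly with distribution functions: since $F_V$ is concave on $[0,1]$ while $F_{V^*}$ is linear on $[0,2\alpha]$ and equal to $1$ thereafter, and the two have equal means, the difference $F_V-F_{V^*}$ changes sign at most once (from nonnegative to nonpositive); the convex-order inequality $\E[f(V)]\ge\E[f(V^*)]$ then follows from the standard cut criterion (Theorem~3.A.44 of \cite{SS07}).

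Your Khintchine argument is more structural: it realises $V^*$ as $\E[V\mid W]$ in a suitable coupling, so the convex order is literally conditional Jensen, and it becomes transparent \emph{why} the uniform threshold is extremal---it is the barycentre of the mixing decomposition. A minor simplification: once you have $V^*=\E[V\mid W]$ you may invoke $f(\E[V\mid W])\le\E[f(V)\mid W]$ directly for convex $f$, without the detour through convexity of $a\mapsto f(aw)$ (the two are of course equivalent here). Your fallback sign-change argument on the density difference $h=g_V-g_{V^*}$ is essentially a hands-on proof of the cut criterion the paper invokes, carried out at the density rather than the CDF level; integrating your $+,-,+$ pattern for $h$ yields exactly the single $+,-$ crossing of $F_V-F_{V^*}$ that the paper uses.
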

\begin{proof}
We directly show $\E[f(V)] \ge \E[f(V^*)]$ for all convex functions $f$, which implies the statement on variance as a special case since the mean of $V$ is fixed as $\alpha$. 
Note that $V$ has a concave distribution function $F_V$ on $[0,1]$, and $V^*$ has a linear distribution function $F_{V^*}$ on $[0,2\alpha]$. Moreover, they have the same mean. Hence, there exists $t\in [0,2\alpha]$ such that $F_V(x)\ge F_{V^*}(x) $ for $x\le t$ 
and $F_V(x)\le F_{V^*}(x) $ for $x\ge t$. 
This condition is sufficient for $\E[f(V)] \ge \E[f(V^*)]$ by Theorem 3.A.44 of \cite{SS07}.
\end{proof}

Combining Propositions \ref{th:1} and  \ref{prop:con}, the canonical choice of the  threshold in the randomized p*-test has a uniform distribution on $[0,2\alpha]$.

We note that it is also possible to use some $V$ with mean less than $\alpha$ and  variance less than $\alpha^2/3$.  This reflects a tradeoff between power and variance. Such a random threshold does not necessarily have a decreasing density. For instance, the point-mass at $\alpha/2$ is a valid choice; the next proposition gives some other choices.
 \begin{proposition}\label{prop:storder}
 Let $V$ be an $\alpha$-random threshold and $V'$ is a random variable satisfying $V'\le_1 V$. We have $\p(P\le V') \le \alpha$ for arbitrary p*-variable  $P$ independent of $V'$.
 \end{proposition}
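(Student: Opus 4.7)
The plan is to reduce the claim to Theorem \ref{th:1}(i) by isolating the marginal distribution of the threshold. The key observation is that because $P$ is independent of $V'$, Fubini's theorem gives
\[
\p(P\le V') \;=\; \E[F_P(V')],
\]
where $F_P$ denotes the cumulative distribution function of $P$. The right-hand side depends on $V'$ only through its marginal law, so we are free to replace $V'$ by any stochastically larger variable without worrying about joint-law issues. Since $F_P$ is increasing and $V'\le_1 V$, the definition of first-order stochastic dominance yields $\E[F_P(V')] \le \E[F_P(V)]$.

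Next, I would invoke the richness of $(\Omega,\mathcal A,\p)$ to construct a random variable $\tilde V$ with the same distribution as $V$ that is independent of $P$. This $\tilde V$ is itself an $\alpha$-random threshold in the sense of Definition \ref{def:V}, so Theorem \ref{th:1}(i) applies to give $\p(P\le\tilde V)\le\alpha$. Because $\E[F_P(V)]$ depends only on the marginal distribution of $V$, it equals $\E[F_P(\tilde V)] = \p(P\le\tilde V)\le\alpha$, closing the chain
\[
\p(P\le V') \;=\; \E[F_P(V')] \;\le\; \E[F_P(V)] \;=\; \p(P\le \tilde V) \;\le\; \alpha.
\]

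An essentially equivalent route, which I might mention, is to apply Strassen's theorem directly to $V'\le_1 V$: one can build a coupling in which $V'\le \tilde V$ almost surely with $\tilde V$ distributed as $V$ and independent of $P$; then $\{P\le V'\}\subseteq\{P\le\tilde V\}$ finishes the argument. I do not foresee any real obstacle here. The only subtlety worth flagging is that $V'$ need neither satisfy condition (D) nor be $(0,1)$-valued in distribution — the only constraint is $V'\le_1 V$ — but the argument deliberately invokes Theorem \ref{th:1}(i) on $V$ rather than on $V'$, so these potential issues never come into play.
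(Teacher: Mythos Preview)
Your proposal is correct and follows essentially the same route as the paper: write $\p(P\le V')=\E[F_P(V')]$ using independence, apply $V'\le_1 V$ to the increasing function $F_P$, and conclude via Theorem~\ref{th:1}. The only difference is that your construction of an independent copy $\tilde V$ is unnecessary---by Definition~\ref{def:V}, an $\alpha$-random threshold $V$ is already independent of the test statistic $P$, so the paper simply writes $\E[F_P(V)]=\p(P\le V)\le\alpha$ directly.
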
 
 \begin{proof}
Let $F$ be the distribution function of $\bar P$. 
For any increasing function $f$, we have $\E[f(V')]\le \E[f(V)]$, which follows from $V' \le _1 V$.
Hence, we have
  \begin{align*}
  \p(  P\le V') = \E[F(V')] \le \E[F(V)] = \p(  P\le V)  \le \alpha,
  \end{align*}   
  where the last inequality follows from Proposition \ref{th:1}.
 \end{proof}

 Proposition \ref{prop:storder} can be applied to a special situation where  a p-variable $P$  and an independent p*-variable $P^*$ are available for  the same null hypothesis. 
 Note that in this case $2\alpha (1-P)\le_1 V \sim \mathrm U[0,2\alpha]$.
Hence,  by  Proposition \ref{prop:storder}, the test 
       \begin{align}\label{eq:ppstar-test} 
 \mbox{rejecting the null hypothesis}   ~\Longleftrightarrow~   \frac{P^*}{2(1-P)}   \le  \alpha.
     \end{align}  
 has size at most $\alpha$. Alternatively, using the fact that $\p(P\le 2\alpha (1-P^*))\le \alpha$ implied by Proposition \ref{th:1} (ii), 
 we can design a test 
        \begin{align}\label{eq:ppstar-test2} 
 \mbox{rejecting the null hypothesis}   ~\Longleftrightarrow~   \frac{P}{2(1-P^*)}   \le  \alpha.
     \end{align}  
The tests \eqref{eq:ppstar-test}  and \eqref{eq:ppstar-test2} both have a deterministic threshold of $\alpha$.
 This observation is useful in Section \ref{sec:71}.
  
     \subsection{Testing with averages of p-values}
\label{sec:71}
    In this section we illustrate  applications of the randomized p*-test to tests  with averages of   dependent p-values.
    
   Let $P_1,\dots,P_K$ be $K$ p-variables for a global  null hypothesis and they are generally not independent. 
  \citet{VW20} proposed testing using generalized means of the p-values, so that the type-I error is controlled at a level $\alpha$ under arbitrary dependence. We focus on the weighted (arithmetic) average $  \bar P:=\sum_{k=1}^K w_k P_k $   for  some weights $w_1,\dots,w_K\ge 0$ with $\sum_{k=1}^K w_k=1$. 
   In case $w_1=\dots=w_K=1/K$, we speak of the   arithmetic average. 
  
  The method of \cite{VW20} on arithmetic average   is given by
  \begin{align}\label{eq:robustav}
 \mbox{rejecting the null hypothesis} ~ \Longleftrightarrow~ \bar P\le \alpha/2.
   \end{align} 
   We will call \eqref{eq:robustav} the \emph{arithmetic averaging test}. 
   The extra factor of $1/2$ is needed to compensate for arbitrary dependence among p-values.
Since $\bar P$ is a p*-variable by Theorem \ref{th:convex},  the test  \eqref{eq:robustav} is a special case of \eqref{eq:halfalpha}. 
   This method is quite conservative, and  it often has relatively low power compared to the Bonferroni correction and other similar methods unless p-values are very highly correlated, as illustrated by the numerical experiments in \cite{VW20}.

To enhance the power of the test \eqref{eq:robustav},  we apply the randomized p*-test  in Section \ref{sec:pstar-test} to design the  \emph{randomized averaging test} by
 \begin{align}\label{eq:randomav1}
 \mbox{rejecting the null hypothesis} ~ \Longleftrightarrow~ \bar P\le V,
   \end{align}     
   where $V$ is an $\alpha$-random threshold independent of $(P_1,\dots,P_K)$. 
%
  Comparing the fixed-threshold test \eqref{eq:robustav} and the randomized averaging test  \eqref{eq:randomav1} with $V\sim \mathrm{U}[0,2\alpha]$,
 there is a $3/4$ probability that the randomized averaging test has a better power,  with the price of randomization.

Next, we consider a special situation, where 
 a p-variable among $P_1,\dots,P_K$ is independent of the others under the null hypothesis. 
 In this case, we can apply \eqref{eq:ppstar-test}, and 
 the resulting test is no longer randomized, as it is determined by the observed p-values.

%

Without loss of generality, assume that $P_1$ is independent of $(P_2,\dots,P_K)$. 
Let $ \bar P_{(-1)}:=\sum_{k=2}^K w_k P_k$ be a weighted average of $(P_2,\dots,P_K)$. 
Using $ \bar P_{(-1)}$ as the p*-variable, the test    \eqref{eq:ppstar-test}  becomes 
  \begin{align}\label{eq:randomav-ind} 
 \mbox{rejecting the null hypothesis}   ~\Longleftrightarrow~   \bar P_{(-1)} + 2\alpha P_1 \le 2\alpha. 
   \end{align}
Following directly from the validity of \eqref{eq:ppstar-test}, 
for any p-variables $P_1,\dots,P_K$ with $P_1$  independent of $(P_2,\dots,P_K)$,  
the  test  \eqref{eq:randomav-ind} has size at most $\alpha$.
 
Comparing \eqref{eq:randomav-ind} with \eqref{eq:robustav}, we   rewrite \eqref{eq:randomav-ind} as 
  \begin{align*} 
 \mbox{rejecting the null hypothesis}   ~\Longleftrightarrow~   \bar P:= \sum_{k=1}^n w'_k P_k  \le \frac{2\alpha}{1+2\alpha},
     \end{align*} 
     where $$w'_1=\frac{2\alpha }{1+2\alpha} ~~\mbox{and}~~w'_k=  \frac{w_k}{1+2\alpha}, ~k=2,\dots,K.$$
     Note that  $ \bar P  $ is a weighted average of $(P_1,\dots,P_K)$.
     Since $\alpha$ is small, the rejection threshold is increased by almost three times, compared to  the test  \eqref{eq:robustav} applied to $ \sum_{k=1}^n w'_k P_k$ using the threshold $\alpha/2$.
For this reason, we will call   \eqref{eq:randomav-ind} the \emph{enhanced averaging test}. 
     
     In particular, if $2\alpha(K-1)=1$, and  $\bar P_{(-1)}$ is the arithmetic average of $(P_2,\dots,P_K)$, then   $\bar P 
  $ is the arithmetic average of $(P_1,\dots,P_K)$. 
For instance, if $K=51$ and $\alpha=0.01$, then the rejection condition for test \eqref{eq:randomav-ind} is $\bar P\le 1/51$, and the rejection condition for \eqref{eq:robustav} is    $\bar P\le 1/200$.
  
  \subsection{Simulation experiments}

We compare  by simulation the performance of  a few tests via merging p-values. 
For the purpose of illustration, we conduct correlated z-tests for the mean of normal samples with variance $1$. More pecisely, 
the null hypothesis   $H_0$ is $ \mathrm{N}(0,1)$ and the alternative is $ \mathrm{N}(\delta,1)$ for some $\delta>0$. 
 The p-variables $P_1,\dots,P_K$ are specified as $P_k=1-\Phi(X_k)$  from the Neyman-Pearson lemma, where $\Phi$ is the standard normal distribution function, and $X_1,\dots,X_K$ are generated from $\mathrm N(\delta,1)$ with pair-wise correlation $\rho$. As illustrated by the numerical studies in \cite{VW20},  the arithmetic average test performs poorly unless p-values are  strongly correlated. 
Therefore, we consider the cases where p-values are highly correlated,  e.g., parallel experiments with shared data or scientific objects. We set $\rho=0.9$ in our simulation studies; this choice is harmless as we are  interested in the relative performance of the averaging methods in this section, instead of their performance against other methods (such as   method of \citet{S86}) that are known work well for lightly correlated or independent p-values.

The significance level  $\alpha$ is set to be $ 0.01$. For a comparison, we consider the following tests:
\begin{enumerate}[(a)]
\item the arithmetic averaging test \eqref{eq:robustav}: reject $H_0$ if $ \bar P \le  \alpha/2$;
\item the randomized averaging test \eqref{eq:randomav1}: reject  $H_0$  if $ \bar P \le  V$ where $V\sim \mathrm{U}[0,2\alpha]$  independent of $\bar P$;
\item the Bonferroni method: reject $H_0$ if $ \min(P_1,\dots,P_K) \le  \alpha/K$;
\item the Simes method: reject $H_0$ if $ \min_k( K P_{(k)}/k) \le  \alpha $ where $P_{(k)}$ is the $k$-th smallest p-value;
\item the harmonic averaging test of \cite{VW20}: reject $H_0$ if $(\sum_{k=1}^K  P_k^{-1})^{-1} \le  \alpha/c_K $ where $c_K>1$ is a constant in \cite[Proposition 6]{VW20}.
\end{enumerate}
The validity (size no larger than $\alpha$) of the Simes method is guaranteed under some dependence conditions on the p-values; see \cite{S98,BY01}.  Moreover, as shown recently by \citet[Theorem 6]{VWW20}, 
the Simes method  dominates any symmetric and deterministic p-merging method valid for arbitrary dependence (such as the (a), (c) and (e);  the Simes method itself is not valid for arbitrary dependence).

In the second setting, we assume that one of the p-variables ($P_1$ without loss of generality) is independent of the rest, and the rest p-variables have a pair-wise correlation of $\rho=0.9$. For this setting, we further include  
\begin{enumerate}[(a)]
\item[(f)] the enhanced averaging test   \eqref{eq:randomav-ind}: reject $H_0$  if $   \bar P_{(-1)} + 2\alpha P_1 \le 2\alpha$.
\end{enumerate} 

The power (i.e., the probability of rejection) of each  test  
is computed from the average of 10,000 replications for varying signal strength $\delta$ and for $K \in\{20,100,500\}$. Results are reported in Figure \ref{fig:2a}. 

In the first setting of correlated p-values, the   randomized averaging test (b) improves the performance of (a) uniformly, at the price of randomization. The Bonferroni method (d) and the harmonic averaging test (e) perform poorly and are both penalized significantly as $K$ increases.  None of these methods visibly outperforms the Simes method, although in some situations the test (b) performs comparably to the Simes method.

In the second setting where an independent p-value exists, the enhanced arithmetic test (f) performs quite well;  it outperforms the Simes method for most parameter values especially for small signal strength $\delta$. This illustrates the significant improvement via incorporating an independent p-value. 

We remark that the averaging methods (a), (b) and (f) should not be used in situations in which correlation among p-values is known to be not very strong. This is because the arithmetic mean does not benefit from an increasing number of independent p-values of similar strength, unlike the methods of Bonferroni and Simes. 

\begin{figure}[t]
\begin{center}
 \makebox[\textwidth]{\includegraphics[width=4.2cm, trim=0 15 0 10, clip]{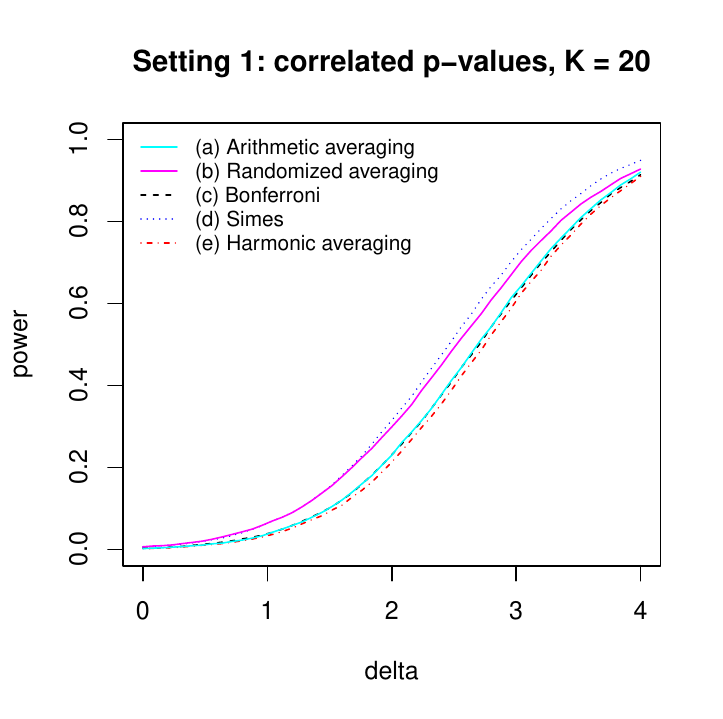}
 \includegraphics[width=4.2cm, trim=0 15 0 10, clip]{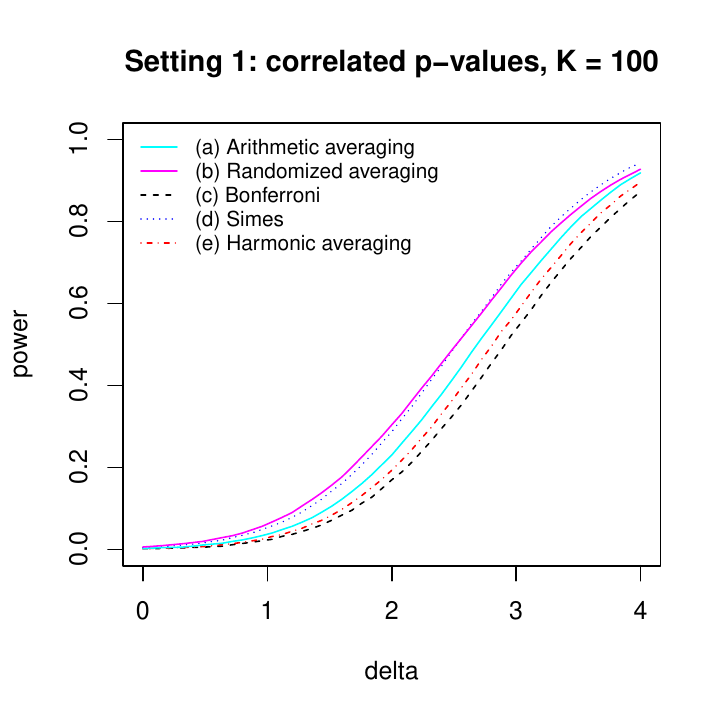}
  \includegraphics[width=4.2cm, trim=0 15 0 10, clip]{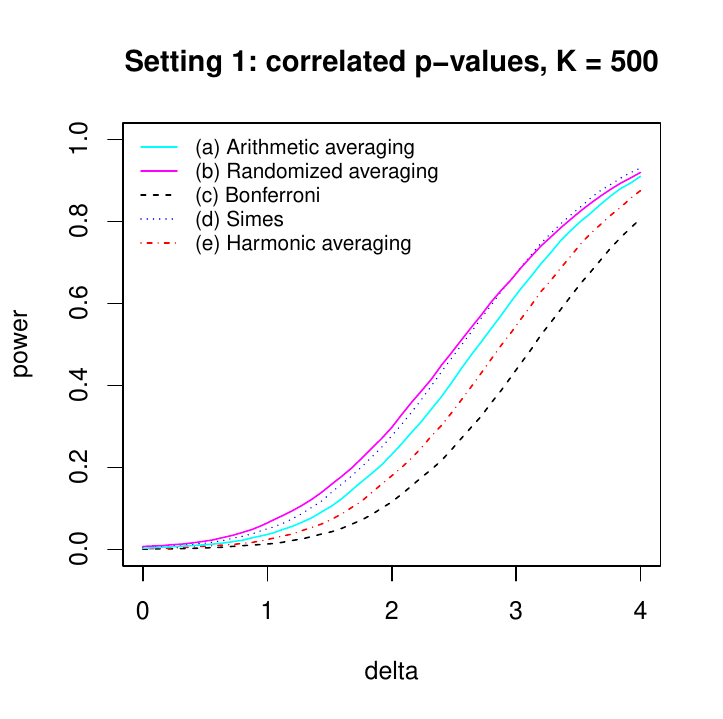}} \\
   \makebox[\textwidth]{\includegraphics[width=4.2cm, trim=0 15 0 10, clip]{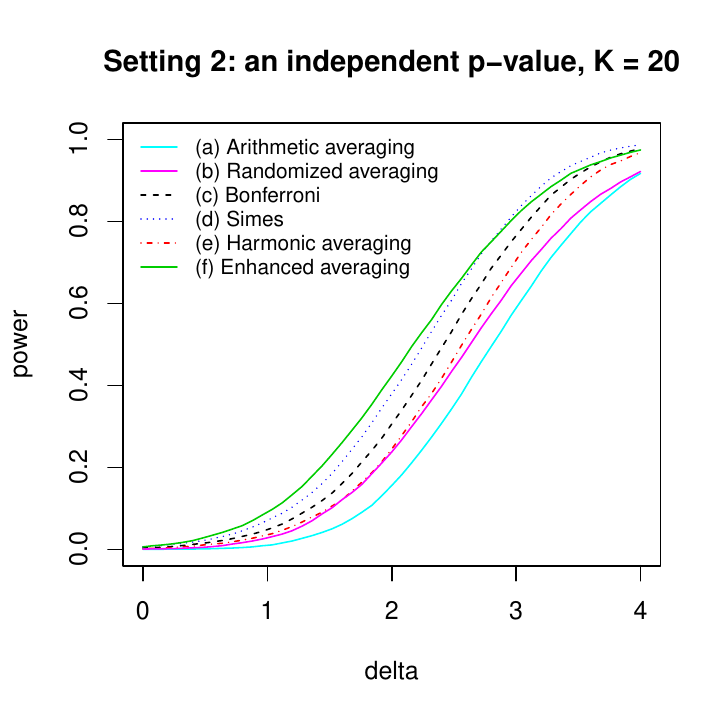}
 \includegraphics[width=4.2cm, trim=0 15 0 10,,clip]{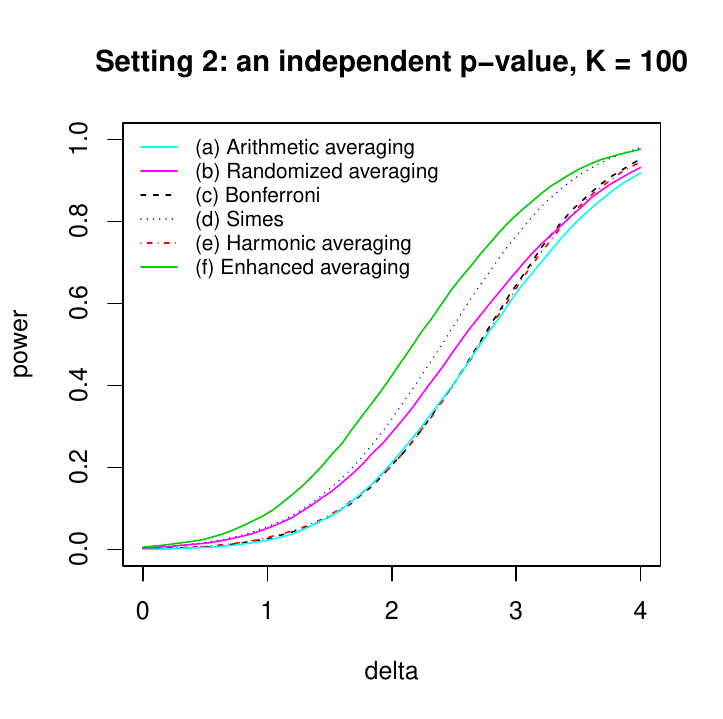}
  \includegraphics[width=4.2cm, trim=0 15 0 10, clip]{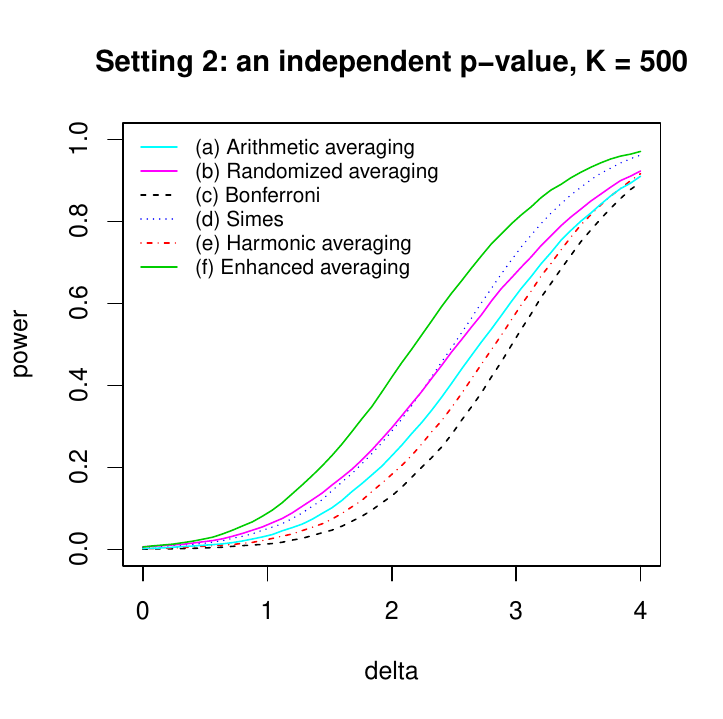}}
\caption{Tests based on combining p-values}
\label{fig:2a}
\end{center}
\end{figure}

\end{document}